\numberwithin{equation}{section}
\numberwithin{equation}{section}
\numberwithin{equation}{section}
\newtheorem{thm}{Theorem}[section]
\newtheorem{cor}[thm]{Corollary}
\newtheorem{lem}[thm]{Lemma}
\newtheorem{prop}[thm]{Proposition}
\newtheorem{defn}[thm]{Definition}
\newtheorem{exam}[thm]{Example}
\newtheorem{rem}[thm]{Remark}
\newcommand{\coker}{\operatorname{Coker}\,}
\newcommand{\Hom}{\operatorname{Hom}\,}
\newcommand{\Ext}{\operatorname{Ext}\,}
\newcommand{\Spec}{\operatorname{Spec}\,}
\newcommand{\Ass}{\operatorname{Ass}\,}
\newcommand{\Tr}{\operatorname{Tr}\,}
\newcommand{\N}{\mathbb{N}}
\newcommand{\fa}{\mathfrak{a}}
\newcommand{\fb}{\mathfrak{b}}
\newcommand{\fp}{\mathfrak{p}}
\begin{document}
\bibliographystyle{amsplain}


\title[Linkage of sheaves of modules]
 {Linkage of sheaves of modules}

\address{Faculty of mathematical sciences and computer,
amirkabir university and iran national science foundation, tehran, iran.}

\email{frahmati@aut.ac.ir}

\email{sayyarikh@gmail.com}
\bibliographystyle{amsplain}

     \author[F. Rahmati]{Farhad Rahmati}
     \author[kh. sayyari]{khadijeh sayyari}

\keywords{Linkage of modules, sheaves of modules.}

\subjclass[2010]{13C40, 13C14, 14F06, 18F20.}



\begin{abstract} Inspired by the works in linkage theory
of modules, we define the concept of linkage of sheaves of modules as a generalization of linkage of modules. Thus, we expressed it in geometry algebraic language.
We show that the linkedness of sheaves is a locally property.
As an important result, we have shown that the sheaf of modules made of Glueing schemes and Glueing linked sheaves of modules is a linked sheaf.



Also, 
it has been shown that for every sheaf of modules on non-domain, it is possible to obtain a maximal linked subsheaf of modules. 
\end{abstract}

\maketitle

\bibliographystyle{amsplain}
\section{introduction}
  Classically, linkage theory refers to
Halphen (1870) and M. Noether \cite{No}(1882) who worked to classify
space curves. In 1974 the significant work of Peskine and Szpiro \cite{PS} brought
breakthrough to this theory and stated it in the modern algebraic
language; two proper ideals $\fa$ and $\fb$ in a Cohen-Macaulay local ring $R$ is said to be linked if
there is a regular sequence $\underline{x}$ in their intersection such that
$\fa =\underline{x}:_R \fb$ and $\fb = \underline{x} :_R \fa$. (benefit )Using of some tools like sheaves, duality and cohomology, they could answered to some important questions in regular local rings.

A new progress in the linkage theory is the work of Martsinkovsky
and Strooker
 \cite{MS} which established the concept of linkage of modules. 
 %

  In this note, inspired by the works in the module case, we present the concept of linkage of sheaves of modules (for
convenience, we may write sheaves or sheaf respectively) on
  a scheme. Thus, we generalized this concept to algebraic geometry. For this purpose, we state the operation 
  ``$\overline{\Tr}$"  (see Definition \ref{B1}) and so this concept is as follows.
Let $X$ be a connected scheme and $\mathfrak{F}$ and $\mathfrak{G}$ be two ``stable" sheaves of $\mathcal{O}_X$-module such that they have free resolutions of finite rank;  we say 
$\mathfrak{F}$ and $\mathfrak{G}$ are linked if $\Omega\overline{\Tr}\mathfrak{F}\cong\mathfrak{G}$ and $\Omega\overline{\Tr}\mathfrak{G}\cong\mathfrak{F}.$


It is citable concept that, in the case where $X=\Spec R$ is an affine scheme, linkedness of two stable finitely generated $R$-modules $M$ and $N$ implies linkedness
    of two sheaves of $\mathcal{O}_X$-modules $\overset {\sim}M$ and $\overset {\sim}N$ and vice versa (see Theorem \ref{C1}). 

 In this work, we consider the above generalization and study some of its basic facts. The organization of the paper goes as follows.

First, in Section 2, we present operations $\overline{\lambda}$ and $\overline{\Tr}$  and show that, among other things, they are functors on an affine scheme(Definition \ref{RR}). 

In Section 3, we consider
these operations in the case where $\mathfrak{F}$ has free resolution. For instance, it is shown that 
$(\overline{\lambda}\mathfrak{F})\mid _U \cong \overline{\lambda}(\mathfrak{F}\mid_U)$ and $(\overline{\Tr}\mathfrak{F})\mid _U \cong \overline{\Tr}(\mathfrak{F}\mid_U)$(Theorem \ref{B5}).
Also, it is proven that a coherent sheaf $\mathfrak{F}$ is a locally free if and only if
$\overline{\Tr}\mathfrak{F}=0$(Proposition \ref{B4}).

In Section 4, we state the concept of linked sheaves.
In view of the definition
it is natural to ask whether the sheaf obtained by glueing a family of linked sheaves is a linked sheaf. We prove that, in some special cases, it does.
More precisely, it is shown that if $(X, \mathfrak{F})$ is the sheaf obtained by glueing a family of sheaves $(X_i, \mathfrak{F}_i),$ under glueing sheaves and glueing schemes conditions, and suppose that, for each $i,$ the scheme $X_i$ is connected, then $\mathfrak{F}$ is linked if there exist integer numbers $t$ and $k$ such that $\{X_i, \mathfrak{F}_i\}$ are ``$(t , k)$ co-rank" linked sheaves of modules(Theorem \ref{C}). Conversely, it is shown that the linkedness of sheaves is a locally property(Theorem \ref{C2.3}).
  
Finally, it would be interesting to know whether there exists a linked subsheaf of an arbitrary sheaf. When $X$ is a Noetherian scheme, $\mathfrak{F}$ is a coherent sheaf and there exists an open affine subset $U\subseteq X$ such that $\mathcal{O}\mid_U$ is not an integral domain and $\Ass\mathfrak{F}(U)\cap\Ass\mathcal{O}(U) \neq\varnothing,$ we obtain that the set 
$$\sum = \{ \mathfrak{F'}\mid \mathfrak{F'}  \text{is a linked subsheaf of } \mathfrak{F}\mid_ U\}$$
is not empty and has maximal element(Theorem \ref{p.}).
  
Thorough out the text, we assume that $X$ is a connected scheme and $\mathfrak{F}$ is a sheaf of $\mathcal{O}_X$-module such that $\mathfrak{F}$ is a coherent sheaf or has a free
 resolution. 
Also, $R$ is a commutative Noetherian ring with $1\neq 0$ and all $R$-modules are finitely generated.


\section{Some functors over the category of sheaves of modules}


In this section, using free resolutions of a sheaf, we introduce the concept of operations $\overline{\Tr}$ and $\overline{\lambda}$ and we will show that they are functors over the category of coherent sheaves of modules on affine schemes. To this end, we recall some definitions.
 \begin{defn}\label{B1}
Let 
$\overset{t_2}\oplus\mathcal{O}_X  \overset{\phi}{\rightarrow} \overset{t_1}\oplus\mathcal{O}_X \overset{\varphi}{\rightarrow}\mathfrak{F} \rightarrow 0$
 be a free resolution of $\mathfrak{F}.$ 
 So, we say $\mathfrak{F}$ has $(t_1 , t_2)$-ranks.
 
 By applying $(-)^*= \mathcal{H}om_{\mathcal{O}_X}(- ,\mathcal{O}_X),$ we get the exact sequence
\begin{equation}\label{e2}
0 \rightarrow \mathfrak{F}^* \overset{\varphi^*}{\rightarrow} (\overset{t_1}\oplus\mathcal{O}_X)^* \overset{\phi^*}{\rightarrow} (\overset{t_2}\oplus\mathcal{O}_X)^* \rightarrow \overline{\Tr} \mathfrak{F} \rightarrow 0
\end{equation}
where transpos of $\mathfrak{F},$ $\overline{\Tr}\mathfrak{F}$ denotes $\coker \phi^*.$ Similarly, one may consider $\overline{\lambda}\mathfrak{F} := \coker \varphi^*=\Omega(\overline{\Tr}\mathfrak{F})$ where "$\Omega$" is the first syzygy. Hence, we get other exact sequences
\begin{equation}
0 \rightarrow \mathfrak{F}^* \overset{\varphi^*}{\rightarrow} (\overset{t_1}\oplus\mathcal{O}_X)^*\rightarrow \overline{\lambda} \mathfrak{F} \rightarrow 0
\end{equation}
and
\begin{equation}\label{e3}
0 \rightarrow \overline{\lambda} \mathfrak{F} \overset{\phi^*}{\rightarrow} (\overset{l}\oplus\mathcal{O}_X)^* \rightarrow \overline{\Tr} \mathfrak{F} \rightarrow 0.
\end{equation}
\end{defn} 
 By this definition, the following remark is notable.
 \begin{rem}\label{B21}
 \begin{itemize}
 \item [(1)]
 By \cite[ exercise 2.1.2 and page 65]{H},
 $(\mathfrak{F})^*$,  $\overline{\Tr}\mathfrak{F}$ and $\overline{\lambda}\mathfrak{F}$ are sheaves of $\mathcal{O}_X$-module.

 \item [(2)]
If $\mathfrak{F}$ is a quasi coherent (or respectively coherent) then, by \cite[ proposition  2.5.7]{H}, $\overline{\Tr}\mathfrak{F}$ and $\overline{\lambda}\mathfrak{F}$ are quasi coherent (respectively coherent).
  \item [(3)]
If $\mathfrak{F}$ is free then, $\varphi$ is isomorphism and so $\overline{\Tr}\mathfrak{F}$ and $\overline{\lambda}\mathfrak{F}$ vanish.
\end{itemize}
\end{rem}
In view of the definition,
it is natural to ask ``whether any choice of free resolutions for $\mathfrak{F}$ has no effect and $\overline{\Tr} \mathfrak{F}$ may be uniquely defined ?". Theorem \ref{B24} shows that, in the case where $X$ is affine scheme, it does under ``stabilization". We will come back to this question again in Section 3.
\begin{defn}
It is said that $M$ is stable if $M$ has no free summands. Also, two $R$-modules $M$ and $N$ are stability isomorphic, denoted $\underline{M}=\underline{N},$ if there exist free modules of finite rank $H$ and $W$ with $H\oplus M\cong N\oplus W.$
Inspired by this definition, we construct the concept of a stable sheaf of modules.
We say $\mathfrak{F}$ is stable if there is no a split exact sequence  
$0 \rightarrow \overset{t}\oplus\mathcal{O}_X \rightarrow \mathfrak{F}.$ So, $\mathfrak{F}$ has no free direct summands.
Also, two sheaves of modules $\mathfrak{F}$ and $\mathfrak{G}$ are stability isomorphic and denote $\underline{\mathfrak{G}}=\underline{\mathfrak{F}} $ if there exist free sheaves of finite rank $\mathfrak{h}$ and $\mathfrak{w}$ with $\mathfrak{h}\oplus\mathfrak{G}\cong\mathfrak{F}\oplus\mathfrak{w}.$
\end{defn}
The following lemma shows that the stability of two modules is transferred to the stability of their associated sheaves and vice versa.
 \begin{lem}\label{B}
 Let $X=\Spec(R)$ be an affine scheme.
 Then $M$ and $N$ are stability isomorphic if and only if $\overset {\sim} M$ and $\overset {\sim} N$ are stability isomorphic. In particular, $M$ is stable if and only if $\overset {\sim} M$ is so.
 \end{lem}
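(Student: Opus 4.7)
The plan is to exploit the standard equivalence of categories between finitely generated $R$-modules and quasi-coherent $\mathcal{O}_X$-modules on the affine scheme $X = \Spec(R)$, given by $M \mapsto \widetilde{M}$ with quasi-inverse $\mathfrak{F} \mapsto \Gamma(X, \mathfrak{F})$. This functor is additive and exact, so it commutes with finite direct sums, and it sends the free module $R^n$ to the free sheaf $\overset{n}{\oplus}\mathcal{O}_X$ and vice versa. These properties are the only inputs required.

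For the forward implication, suppose $H \oplus M \cong N \oplus W$ with $H$ and $W$ free of finite rank. Applying $\sim$ to both sides gives $\widetilde{H} \oplus \widetilde{M} \cong \widetilde{N} \oplus \widetilde{W}$, and since $\widetilde{H}$ and $\widetilde{W}$ are free $\mathcal{O}_X$-modules of finite rank, we conclude $\underline{\widetilde{M}} = \underline{\widetilde{N}}$. Conversely, suppose $\mathfrak{h} \oplus \widetilde{M} \cong \widetilde{N} \oplus \mathfrak{w}$ with $\mathfrak{h} = \overset{s}{\oplus}\mathcal{O}_X$ and $\mathfrak{w} = \overset{t}{\oplus}\mathcal{O}_X$. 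Applying $\Gamma(X, -)$ (which is exact on quasi-coherent sheaves over an affine scheme and serves as quasi-inverse of $\sim$) yields $R^s \oplus M \cong N \oplus R^t$, so $\underline{M} = \underline{N}$.

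For the ``in particular'' statement, observe that $\widetilde{M}$ has a free direct summand precisely when there exists a split monomorphism $\overset{t}{\oplus}\mathcal{O}_X \hookrightarrow \widetilde{M}$ for some $t \geq 1$. Since the functors $\sim$ and $\Gamma(X,-)$ are mutually inverse equivalences that preserve monomorphisms and split short exact sequences, such a split injection of sheaves corresponds bijectively to a split injection $R^t \hookrightarrow M$ of $R$-modules. Hence $M$ is stable iff $\widetilde{M}$ is stable.

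There is no real obstacle here; the only subtlety worth flagging is to make sure that on the affine scheme $X$ the free sheaves $\mathfrak{h}$ and $\mathfrak{w}$ appearing in the sheaf-side stability isomorphism are of the form $\widetilde{R^s}$ and $\widetilde{R^t}$, so that pulling back via $\Gamma$ actually produces free $R$-modules of finite rank. This is immediate from $\Gamma(X, \overset{n}{\oplus}\mathcal{O}_X) = R^n$, which completes the argument.
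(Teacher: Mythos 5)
Your proof is correct; the paper actually states Lemma \ref{B} without any proof, and your argument via the standard equivalence of categories $M\mapsto \overset{\sim}{M}$, $\mathfrak{F}\mapsto\Gamma(X,\mathfrak{F})$ (additive, mutually inverse, exchanging $R^{n}$ with $\overset{n}{\oplus}\mathcal{O}_{X}$) is exactly the argument the authors implicitly rely on. The separate treatment of the ``in particular'' clause via preservation of split monomorphisms is the right touch, since stability of a single module is not a formal consequence of the two-object stability-isomorphism statement.
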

Via the following lemma, one may define
$\overline{\Tr}$ and $\overline{\lambda}$ as functors over the category of coherent sheaves of $\mathcal{O}_X$-modules when $X$ is an offine scheme.
  \begin{lem}\label{B2}
Let $X=\Spec(R)$ be an affine scheme and 
assume that free resolutions of 
$\overset{\sim}{M}$ and $\overset {\sim} N$ and a morphism $f : \overset{\sim}{M}\rightarrow\overset {\sim} N,$ as the following diagram, exist.
 $$\begin{CD}
         &&&&&&&&\\
         \ \ &&&& \oplus^{l_2}\mathcal{O}_X @>{h_1}>>\oplus^{l_1}\mathcal{O}_X@>{g_1}>>\overset{\sim}M @>>>0&  \\
          &&&&&&&& @ VV{f}V  \\
         \ \  &&&& \oplus^{l_4}\mathcal{O}_X @>{h_2}>>\oplus^{l_3}\mathcal{O}_X @>{g_2}>>\overset{\sim}N @>>>0.&\\
        \end{CD}$$\\
 Then       
        \begin{itemize}
        \item[(1)] applying ``$\Gamma (X, -)$" and '' $\sim$'', one may complete the above diagram
        and get the following commutative diagram
  $$\begin{CD}
         &&&&&&&&\\
         \ \ &&&& \oplus^{l_2}\mathcal{O}_X @>{h_1}>>\oplus^{l_1}\mathcal{O}_X@>{g_1}>>\overset{\sim}M @>>>0&  \\
          &&&&  @VVV   @VVV  @VV{f}V\\
         \ \  &&&&  \oplus^{l_4}\mathcal{O}_X @>{h_2}>>\oplus^{l_3}\mathcal{O}_X @>{g_2}>>\overset{\sim}N @>>>0.&\\
        \end{CD}$$\\
        \item[(2)] 
        Also, in view of the definition \ref{B1}, one finds the following diagram
   $$\begin{CD}
         &&&&&&&&\\
         \ \ &&&& 0 @>>>\overset{\sim}{M^*} @>{g_1^*}>>(\oplus^{l_1}\mathcal{O}_X)^* @>{h_1^*}>>(\oplus^{l_2}\mathcal{O}_X)^*@>>>\overline{\Tr}\overset{\sim}{M}@>>>0&  \\
          &&&&&&  @VV{f^*}V   @VVV   @VVV  @VV{\widehat{f}^*}V\\
         \ \  &&&& 0 @>>> \overset{\sim}{N^*}@>{g_2^*}>>(\oplus^{l_3}\mathcal{O}_X)^*@>{h_2^*}>>(\oplus^{l_4}\mathcal{O}_X)^* @>>>\overline{\Tr}\overset{\sim}{N} @>>>0.&\\
        \end{CD}$$\\      
On the other hand, via \cite[2.6]{A}, there exists the following commutative diagram   
 $$\begin{CD}
         &&&&&&&&\\
         \ \ &&&& 0 \rightarrow\Ext^1_R(\Tr M, -) @>>> M\otimes_R - @>>>\Hom_R( M^+,-)@>>>\Ext^2_R(\Tr M, -)\rightarrow0&  \\
          &&&&  @VVV   @VVV   @VVV  @VVV\\
         \ \  &&&& 0 \rightarrow \Ext^1_R(\Tr N, -) @>>> N\otimes_R - @>>>\Hom_R( N^+,-)@>>>\Ext^2_R(\Tr N, -) \rightarrow0,&\\
        \end{CD}$$\\
where $(-)^+:= \Hom_R(-,R).$   
This diagram implies another commutative diagram
 $$\begin{CD}\label{b3}
         &&&&&&&&\\
         \ \ && 0\rightarrow\mathcal{E}xt^1_{\mathcal{O}_X}(\overline{\Tr}\overset{\sim}{M} , -) @>>>\overset{\sim}M\otimes_{\mathcal{O}_X} -@>>>\mathcal{H}om_{\mathcal{O}_X}(\overset{\sim}{M^*} ,- )@>>>\mathcal{E}xt^2_{\mathcal{O}_X}(\overline{\Tr}\overset{\sim}{M} , -) \rightarrow 0&  \\
      &&  @VV{\xi}V   @VV{f^*\otimes -}V @VV{\mathcal{H}om(f^*,-)}V  @VVV\\
         \ \  && 0\rightarrow\mathcal{E}xt^1_{\mathcal{O}_X}(\overline{\Tr}\overset{\sim}{N},-) @>>>\overset{\sim}N\otimes_{\mathcal{O}_X}-@>>>\mathcal{H}om_{\mathcal{O}_X}(\overset{\sim}{N^*} ,- )@>>>\mathcal{E}xt^2_{\mathcal{O}_X}(\overline{\Tr}\overset{\sim}{N},-)\rightarrow 0.&\\
        \end{CD}$$\\
        Now, by \cite[2.2]{A}, it
is straight forward to see that the morphism $\xi$ only depends on $f.$
 \end{itemize}
 \end{lem}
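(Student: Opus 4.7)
The plan is to build both parts by pure functoriality. For part (1), I would use the equivalence between quasi-coherent $\mathcal{O}_X$-modules on $X=\Spec R$ and $R$-modules, realized by the adjoint pair $\sim$ and $\Gamma(X,-)$. Applying $\Gamma(X,-)$ to the morphism $f : \overset{\sim}{M}\to\overset{\sim}{N}$ yields a morphism $\Gamma(f) : M\to N$. Under $\Gamma(X,-)$ the two rows become free (hence projective) resolutions of $M$ and $N$, so the classical Comparison Theorem for projective resolutions lifts $\Gamma(f)$ to a chain map between them, uniquely up to chain homotopy. Applying $\sim$ to this chain map (which carries free $R$-modules of rank $l_i$ back to $\oplus^{l_i}\mathcal{O}_X$ and preserves the differentials $h_j,g_j$) produces the desired vertical morphisms, and the commutativity on the sheaf level follows from the functoriality of $\sim$.

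For part (2), I would apply the contravariant functor $(-)^*=\mathcal{H}om_{\mathcal{O}_X}(-,\mathcal{O}_X)$ to the commutative diagram obtained in (1). This reverses arrows and yields the first diagram of (2); the rightmost vertical map $\widehat{f}^*$ is induced on the cokernels $\overline{\Tr}\overset{\sim}{M}=\coker h_1^*$ and $\overline{\Tr}\overset{\sim}{N}=\coker h_2^*$ via the universal property of the cokernel. The leftmost vertical map $f^*$ is nothing but $\mathcal{H}om(f,\mathcal{O}_X)$, so all squares commute by functoriality.

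For the $\mathcal{E}xt$--diagram, I would start from the functorial four-term sequence of \cite[2.6]{A} applied at the module level, which is natural in $M$; the vertical maps are the maps induced by $\Gamma(f)$. Sheafifying term by term, and invoking the standard identifications on a Noetherian affine scheme between $\widetilde{\Ext^i_R(-, -)}$ and $\mathcal{E}xt^i_{\mathcal{O}_X}(\widetilde{\phantom{-}}, \widetilde{\phantom{-}})$, and similarly for $\Hom$ and $\otimes$ on finitely generated modules, I obtain the commutative square with the vertical maps $\xi$, $f^*\otimes -$, $\mathcal{H}om(f^*,-)$ and the map on $\mathcal{E}xt^2$. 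Finally, to prove that $\xi$ depends only on $f$ (and not on the particular free resolutions chosen to compute $\overline{\Tr}$), I would invoke \cite[2.2]{A} directly at the module level, which establishes precisely this independence for the corresponding map between $\Ext^1_R(\Tr M,-)$ and $\Ext^1_R(\Tr N,-)$, and then pass back through $\sim$.

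The main obstacle is not conceptual but organizational: one must verify that $\sim$ commutes with all of $\Ext$, $\Hom$ and $\otimes$ in the relevant finite-generation setting, so that the module-level Auslander sequence really does sheafify term by term. Because these identifications are standard for finitely generated modules over a Noetherian ring, the argument ultimately reduces to careful bookkeeping of functoriality rather than any new homological input.
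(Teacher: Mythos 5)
Your proposal is correct and follows essentially the same route the paper takes (the paper in fact leaves the proof implicit in the statement): lift $\Gamma(f)$ between the induced free presentations via the Comparison Theorem and exactness of $\Gamma(X,-)$ on quasi-coherent sheaves, return by $\sim$, dualize with $\mathcal{H}om_{\mathcal{O}_X}(-,\mathcal{O}_X)$ using the universal property of the cokernel for $\widehat{f}^*$, and sheafify the natural four-term sequence of \cite[2.6]{A}, with \cite[2.2]{A} giving the independence of $\xi$ from the chosen resolutions. The only point worth making explicit is that the rows are presentations rather than full resolutions, but the projective lifting argument goes through unchanged.
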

The next two corollary show that $\overline{\Tr}$ may be defined as a functor on the category of coherent sheaves on affine schemes under stabilization. In next section, we prove that is so in the case where $X$ is only a scheme.
\begin{cor}\label{B24}
Assume that the conditions of lemma \ref{B2} hold. Then $\underline{\overline{\Tr}\overset{\sim}{M}} \cong \underline{\overline{\Tr}\overset{\sim}{N}}$ if and only if $\underline{\overset{\sim}{M}} \cong \underline{\overset{\sim}{N}}$. In particular, any choice of free resolutions for $\overset{\sim}{M}$ has no effect and $\overline{\Tr}\overset{\sim}{M}$ is uniquely defined under stabilization.
\end{cor}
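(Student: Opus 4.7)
The plan is to reduce to the classical Auslander--Bridger theorem at the module level by showing that on an affine scheme $X=\Spec R$ the operation $\overline{\Tr}$ commutes with sheafification, and then to transfer the stable-isomorphism statements back and forth via Lemma \ref{B}.

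The first step I would carry out is to prove $\overline{\Tr}\overset{\sim}{M}\cong\widetilde{\Tr M}$ (and analogously for $N$). Starting from the free presentation $R^{l_2}\to R^{l_1}\to M\to 0$ whose sheafification is exactly the given resolution of $\overset{\sim}{M},$ and using that $\sim$ is exact together with $\widetilde{\Hom_R(F,R)}\cong\mathcal{H}om_{\mathcal{O}_X}(\widetilde{F},\mathcal{O}_X)$ for any finitely generated free $R$-module $F,$ the top row of the diagram in Lemma \ref{B2}(2) is identified with the sheafification of the classical exact sequence $0\to M^+\to (R^{l_1})^+\to (R^{l_2})^+\to \Tr M\to 0.$ Taking cokernels yields the claimed identification, and similarly for $N.$

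The second step is to invoke Auslander--Bridger \cite{A}: for finitely generated $R$-modules, $\underline{\Tr M}\cong\underline{\Tr N}$ if and only if $\underline{M}\cong\underline{N}.$ Combining this with Lemma \ref{B}, which exchanges stable isomorphism of modules with stable isomorphism of their sheafifications, gives the chain
$$\underline{\overline{\Tr}\overset{\sim}{M}}\cong\underline{\overline{\Tr}\overset{\sim}{N}}\iff\underline{\Tr M}\cong\underline{\Tr N}\iff\underline{M}\cong\underline{N}\iff\underline{\overset{\sim}{M}}\cong\underline{\overset{\sim}{N}},$$
which is the asserted equivalence. For the ``in particular'' part, I would observe that on $X$ every free resolution of $\overset{\sim}{M}$ by finite-rank free sheaves is the sheafification of some free resolution of $M,$ so applying the equivalence with $N=M$ and two candidate resolutions in the two rows of Lemma \ref{B2} shows that the two resulting cokernels are stably isomorphic sheaves.

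The main technical obstacle is the naturality check underlying the identification $\overline{\Tr}\overset{\sim}{M}\cong\widetilde{\Tr M}$: one must verify that the isomorphism $\widetilde{\Hom_R(-,R)}\cong\mathcal{H}om_{\mathcal{O}_X}(\widetilde{-},\mathcal{O}_X)$ assembles into a morphism of complexes, so that the commutative diagram of Lemma \ref{B2}(2) is genuinely the sheafification of the module-level transpose diagram. This is essentially automatic for finitely presented modules on a Noetherian affine scheme, but it should be recorded explicitly before citing the classical result.
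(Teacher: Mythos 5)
Your proposal is correct, but it reaches the conclusion by a genuinely different route than the paper. The paper's own proof is a one-line appeal to the last commutative diagram of Lemma \ref{B2}(2), i.e.\ the sheafified Auslander--Bridger four-term exact sequence of functors
$0\to\mathcal{E}xt^1(\overline{\Tr}\overset{\sim}{M},-)\to\overset{\sim}{M}\otimes-\to\mathcal{H}om(\overset{\sim}{M^*},-)\to\mathcal{E}xt^2(\overline{\Tr}\overset{\sim}{M},-)\to 0$,
together with the observation that the induced map $\xi$ depends only on $f$; the equivalence is then read off at the level of these functors. You instead descend to the module level: you first identify $\underline{\overline{\Tr}\overset{\sim}{M}}$ with $\underline{\widetilde{\Tr M}}$ by sheafifying the classical transpose sequence, and then chain together the classical statement $\underline{\Tr M}\cong\underline{\Tr N}\iff\underline{M}\cong\underline{N}$ with Lemma \ref{B}. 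This is more explicit and arguably more self-contained than the paper's argument, and your final remark correctly isolates the one point needing care (naturality of $\widetilde{\Hom_R(-,R)}\cong\mathcal{H}om_{\mathcal{O}_X}(\widetilde{(-)},\mathcal{O}_X)$, which holds for finitely presented modules on a Noetherian affine scheme by \cite[2.5.5]{H}). Two caveats are worth recording. First, your step 1 is essentially Theorem \ref{B3}(2) of the paper, which the paper proves \emph{after} this corollary and by \emph{citing} it; since you prove the identification independently from the presentations, there is no circularity, but the logical order differs from the paper's. Second, the equivalence $\underline{\Tr M}\cong\underline{\Tr N}\iff\underline{M}\cong\underline{N}$ must be used with the paper's free-summand notion of stabilization rather than the projective-summand one; this is harmless because $\Tr$ is computed from free presentations, so both $\Tr\Tr M\cong M$ and the independence of $\Tr M$ from the chosen presentation hold up to free direct summands, but it deserves a sentence. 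Your observation that every finite-rank free resolution of $\overset{\sim}{M}$ on $\Spec R$ is the sheafification of a free resolution of $M$ (via the equivalence between finitely generated $R$-modules and coherent sheaves, and exactness of global sections on quasi-coherent sheaves over an affine scheme) then correctly delivers the ``in particular'' clause by taking $N=M$ with two different resolutions.
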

\begin{proof}
The result follows from the last commutative diagram in Lemma \ref{B2}.
\end{proof}
\begin{cor}\label{RR}
Let $X$ be an affine scheme. Then 
 $\overline{\Tr}$ is a functor on the stabilizations of the category of coherent sheaf which sends $\mathfrak{F}$ to $\overline{\Tr}(\mathfrak{F}).$ In particular, $\overline{\lambda}:= \Omega \overline{\Tr}$ is a functor too. 
\end{cor}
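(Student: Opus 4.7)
The plan is to bootstrap off the data already assembled in Lemma \ref{B2} and Corollary \ref{B24}. First I would fix the categorical setup: the source category is the category of coherent $\mathcal{O}_X$-modules passed to stable equivalence (objects identified when $\underline{\mathfrak{F}} = \underline{\mathfrak{G}}$, and morphism sets quotiented by those factoring through free sheaves), and I would define $\overline{\Tr}$ on objects by sending the stable class of $\mathfrak{F}$ to the stable class of $\overline{\Tr}\mathfrak{F}$. Well-definedness at the level of objects is precisely Corollary \ref{B24}: the class $\underline{\overline{\Tr}\mathfrak{F}}$ is independent of the choice of free resolution and is preserved under stable isomorphism.

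Next I would define $\overline{\Tr}$ on morphisms. Given $f \colon \mathfrak{F} \to \mathfrak{G}$, part (1) of Lemma \ref{B2} lifts $f$ to a chain map between the chosen free resolutions, and then part (2) produces the vertical arrow $\widehat{f}^* \colon \overline{\Tr}\mathfrak{F} \to \overline{\Tr}\mathfrak{G}$ from the induced diagram on duals. The chain-map lift is not unique, so the heart of the argument is to show that $\widehat{f}^*$ is well defined modulo maps factoring through free sheaves. For this I would invoke the last commutative diagram of Lemma \ref{B2} together with its concluding remark that the morphism $\xi$ depends only on $f$; this is exactly what pins $\widehat{f}^*$ down in the stable category independently of the chosen lift.

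With well-definedness in hand, the two functoriality axioms are essentially formal. For the identity, $\mathrm{id}_\mathfrak{F}$ admits the identity chain map as a lift, and dualizing yields $\widehat{\mathrm{id}}^* = \mathrm{id}_{\overline{\Tr}\mathfrak{F}}$. For composition, compatible lifts of $f$ and $g$ may be composed to give a lift of $g\circ f$, and the dual of a composite of chain maps is the composite of the duals; by the previous paragraph both $\widehat{g\circ f}^{\,*}$ and $\widehat{g}^{\,*}\circ\widehat{f}^{\,*}$ depend only on the underlying maps, so they coincide in the stable category. The statement for $\overline{\lambda} = \Omega\overline{\Tr}$ then follows immediately, because the first syzygy $\Omega$ is itself a functor on stable classes and the composite of functors is a functor.

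The step I expect to be the main obstacle is precisely the well-definedness of $\widehat{f}^*$ on morphisms. Two chain-map lifts of $f$ differ by a nullhomotopic map, and one must verify that the resulting discrepancy on $\overline{\Tr}\mathfrak{F} \to \overline{\Tr}\mathfrak{G}$ factors through a free sheaf and hence vanishes in the stabilized morphism set. The most efficient route I see is to use the naturality diagram of Lemma \ref{B2} as a black box rather than chasing homotopies by hand, leaning on the closing remark after it that $\xi$ depends only on $f$; any direct homotopy chase would amount to re-proving the affine-scheme analogue of the classical fact that $\Tr$ descends to the stable category of modules.
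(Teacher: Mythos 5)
Your proposal is correct and follows essentially the same route the paper intends: object-level well-definedness from Corollary \ref{B24}, the morphism action from the lifted chain map and its dual in Lemma \ref{B2}, and independence of the lift from the final commutative diagram there together with the observation that $\xi$ depends only on $f$. The paper leaves the functoriality axioms implicit, so your explicit verification of identities and composition is a welcome elaboration rather than a departure.
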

 There are some relations between the transpose of a module and the transpose of its associated sheaf
as the following theorem shows.
 \begin{thm}\label{B3}
 Let $X=\Spec(R)$ be an affine scheme.
 Then the following statments hold.
\begin{enumerate}
\item 
 $\underline{\overline{\lambda} \overset {\sim} M} \cong \underline{ \overset {\sim} {\lambda M }}$
 
 \item
 $\underline{\overline{\Tr} \overset {\sim} M} \cong \underline{ \overset {\sim} {\Tr M}}$
 
 \item
 $\underline{\Gamma (X,\overline{\lambda} \overset {\sim} M)} \cong \underline{ \Gamma (X,\overset {\sim} {\lambda M })}=\underline{ \lambda M}$
 
 \item
 $\underline{\Gamma (X,\overline{\Tr} \overset {\sim} M)} \cong \underline{ \Gamma (X,\overset {\sim} {\Tr M })}=\underline{\Tr M}$
\item
$\underline{\overline{\Tr}(\overline{\Tr}\overset {\sim} M) }\cong \underline{ \overset {\sim} M}$.
\end{enumerate}
 \end{thm}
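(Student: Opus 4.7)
The plan is to start from a finite free presentation
$R^{t_2} \xrightarrow{\phi} R^{t_1} \xrightarrow{\varphi} M \to 0$
of $M$, sheafify it (sheafification is exact and sends $R^t$ to $\mathcal{O}_X^t$), and chase the resulting diagram through the duality functor. The key compatibility I would establish first is that, for free modules of finite rank, $(\widetilde{R^t})^{*} = \mathcal{H}om_{\mathcal{O}_X}(\mathcal{O}_X^t,\mathcal{O}_X)$ is canonically $\mathcal{O}_X^t$, and the natural map $\widetilde{\Hom_R(R^{t_1},R^{t_2})} \to \mathcal{H}om_{\mathcal{O}_X}(\mathcal{O}_X^{t_1},\mathcal{O}_X^{t_2})$ is an isomorphism; in particular the sheafified dual $\widetilde{\phi^{*}}$ agrees with $\widetilde{\phi}^{*}$. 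Combining this with exactness of sheafification gives
\[
\overline{\lambda}\widetilde{M} \;=\; \coker(\widetilde{\varphi}^{*}) \;=\; \widetilde{\coker(\varphi^{*})} \;=\; \widetilde{\lambda M},
\]
and similarly $\overline{\Tr}\widetilde{M} = \widetilde{\Tr M}$. Since the choice of free presentation only affects these objects up to stability (and by Lemma \ref{B} this stability transfers between modules and their associated sheaves), this yields (1) and (2).

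For (3) and (4), I would apply $\Gamma(X,-)$ to the isomorphisms in (1) and (2). Since $X = \Spec(R)$ is affine and the sheaves $\widetilde{\lambda M}$, $\widetilde{\Tr M}$ are quasi-coherent, $\Gamma(X,\widetilde{N}) = N$ recovers the module on the right, giving the stated equalities $\Gamma(X,\widetilde{\lambda M}) = \lambda M$ and $\Gamma(X,\widetilde{\Tr M}) = \Tr M$; the stability isomorphism on the left then follows from (1), (2) and functoriality of $\Gamma(X,-)$ (composed with Lemma \ref{B} to preserve stability).

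For (5), the strategy is to iterate (2):
\[
\underline{\overline{\Tr}(\overline{\Tr}\widetilde{M})} \;\cong\; \underline{\overline{\Tr}(\widetilde{\Tr M})} \;\cong\; \underline{\widetilde{\Tr(\Tr M)}} \;\cong\; \underline{\widetilde{M}},
\]
where the last isomorphism uses the classical Auslander--Bridger fact $\underline{\Tr\,\Tr M} \cong \underline{M}$ together with the fact (Lemma \ref{B}) that sheafification preserves stability isomorphism. A small technical point that has to be checked before this chain is legitimate is that $\Tr M$ is again finitely generated (so that $\widetilde{\Tr M}$ has a finite free presentation over the Noetherian scheme $X$), which follows from the finite presentation hypothesis on $M$ and the defining exact sequence of $\Tr M$.

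The main obstacle I expect is bookkeeping of the stability ambiguity: at every step a different choice of resolution produces a genuinely different sheaf (or module), and the isomorphisms hold only on stabilizations. The hard part is therefore not the diagram chase itself, but verifying that the canonical sheafification maps intertwine the stability relations on the module side and the sheaf side coherently, so that the symbol $\underline{\phantom{X}}$ is preserved under each of the three operations (sheafification, dualization, cokernel) used in the argument. Once Lemma \ref{B} and Corollary \ref{B24} are invoked carefully, the rest of the proof reduces to the exactness of sheafification and the standard identifications above.
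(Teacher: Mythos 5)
Your proposal is correct and follows essentially the same route as the paper: sheafify the dualized free presentation, use exactness of $\sim$ and the identification $\widetilde{\Hom_R(M,N)}\cong \mathcal{H}om_{\mathcal{O}_X}(\widetilde{M},\widetilde{N})$ on free modules to match the cokernels, then handle (3)--(4) via $\Gamma(X,\widetilde{N})=N$ and (5) via the Auslander--Bridger identity $\underline{\Tr\Tr M}\cong\underline{M}$ combined with (2). The only cosmetic difference is that you make the compatibility $\widetilde{\phi^{*}}=\widetilde{\phi}^{*}$ explicit where the paper just cites Hartshorne's propositions.
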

\begin{proof}
First note that using \ref{B}, one may assume that $M$ is stable. Let $F_2 \overset{f}{\rightarrow} F_1 \overset{g}{\rightarrow} M \rightarrow 0$ be a minimal free resolution of $M.$ Applying $(-)^+$, we get the exact sequences 
\begin{equation}\label{e1}
0 \rightarrow M^+ \overset{g^+}{\rightarrow} (F_1)^+ \rightarrow \lambda M \rightarrow 0
\end{equation}
and
\begin{equation}\label{e30}
0 \rightarrow \lambda M \overset{f^+}{\rightarrow} (F_2)^+ \rightarrow \Tr M \rightarrow 0.
\end{equation}

 \begin{itemize}
 \item [(1)]
 Applying "$\sim$" on  (\ref{e1}), by \cite[propositions 2.5.5 and 2.5.2]{H}, we get the following exact sequenc of sheaves 
\begin{equation}
0 \rightarrow \mathcal{H}om_{\mathcal{O}_X}(\overset{\sim} M,\mathcal{O}_X) \rightarrow \mathcal{H}om_{\mathcal{O}_X}(\overset{\sim} {F_1},\mathcal{O}_X) \rightarrow \overset{\sim}{\lambda M} \rightarrow 0 
\end{equation}
Now, the result follows using \ref{B24} and \ref{B1}.
\item [(2)] Follows using similar argument as used above, previous isomorphism and Five Lemma.
\item [(3)]
and (4) Considering \cite[proposition 2.5.1] {H} and using items (1) and (2),
the result has desired. 
 \item [(5)]
 By \cite[2.6]{A}, $\Tr \Tr (-)\cong id (-)$ in the category of finitely generated modules. This, in conjunction
with Theorem \ref{B3}, implies that
$\overline{\Tr}(\overline{\Tr} \overset {\sim} M) \cong \overset {\sim} {M}.$
\end{itemize}
 \end{proof}

 \section{Some properties of $\overline{\Tr}$ and $\overline{\lambda}$}
 
%
The goal of this section is to study the oprations $\overline{\Tr}$ and $\overline{\lambda}$ over the category of sheaves of modules which has $(t_1 , t_2)$-rank, for some $t_i \in \N.$ 

First, note that any choice of free resolutions for $\mathfrak{F}$ has no effect and $\overline{\Tr}\mathfrak{F}$ and $\overline{\lambda}\mathfrak{F}$ are uniquely defined under stabilization. Indeed,
let $\fp$ be an arbitrary point of a scheme $X.$ By the definition \ref{B1}, $\mathfrak{F}$ is generated by global sections and $\mathfrak{F}_{\fp}$ is a finitely generated $(\mathcal{O}_X)_{\fp}$-module (for convenience, we write $\mathcal{O}_{\fp}$-module). So, via \cite[2.5]{A}, $\Tr (\mathfrak{F}_{\fp})$ and $\lambda (\mathfrak{F}_{\fp})$ are unique under stabilization.
Considering the stalks, one obtains a commutative diagram    
 $$\begin{CD}
         &&&&&&&&\\
         \ \ &&&& 0 @>>>(\mathfrak{F}^*)_{\fp} @>{(\varphi^*)_{\fp}}>>(\overset{t_1}\oplus\mathcal{O}_X^*)_{\fp} @>{(\phi^*)_{\fp}}>>(\overset{t_2}\oplus\mathcal{O}_X^*)_{\fp}@>>>(\overline{\Tr}\mathfrak{F})_{\fp}@>>>0&\\
          &&&&&&  @VV{\cong}V   @VV{\cong}V   @VV{\cong}V  @VV{\exists}V\\
           \ \  &&&& 0 @>>> (\mathfrak{F}_{\fp})^+@>>>\overset{t_1}\oplus(\mathcal{O}_{\fp})^+@>>>\overset{t_2}\oplus(\mathcal{O}_{\fp})^+  @>>>\Tr (\mathfrak{F}_{\fp}) @>>>0&\\
        \end{CD}$$\\
     and gets $(\overline{\Tr}\mathfrak{F})_{\fp}\cong \Tr(\mathfrak{F}_{\fp}).$
     
Now, suppose that $\overset{k_2}\oplus\mathcal{O}_X  \rightarrow \overset{k_1}\oplus\mathcal{O}_X \rightarrow\mathfrak{F} \rightarrow 0 $
is another free resolution of $\mathfrak{F}.$ So, $\underline{(\overline{\Tr}\mathfrak{F})_{\fp}}\cong \underline{(\overline{\Tr}'\mathfrak{F})_{\fp}}.$ This implies $\underline{\overline{\Tr}\mathfrak{F}}\cong \underline{\overline{\Tr}'\mathfrak{F}}.$ Also,
    $\underline{\overline{\lambda}\mathfrak{F}}= \underline{\overline{\lambda}' \mathfrak{F}}$ follows from using similar argument as used above.
       

 
 Next, we examine the effects of the above operations on the resterictions of a sheaf of modules under an open subsets of a scheme. To this end, the following two items can be useful.
 
 \begin{lem}\label{R..1}
The sequence 
$$0\rightarrow\mathfrak{F}\rightarrow\mathfrak{F'}\rightarrow\mathfrak{F''}\rightarrow 0$$
of $\mathcal{O}_X$-modules is exact if and only if, for each open subset $U\subseteq X$, the sequence 
$$0\rightarrow\mathfrak{F}\mid_U\rightarrow\mathfrak{F'}\mid_U\rightarrow\mathfrak{F''}\mid_U\rightarrow 0$$
 is exact. In particular, $\mathfrak{F}\cong\mathfrak{F'}$ if and only if for every open subset $U\subseteq X$, $\mathfrak{F}\mid_U\cong\mathfrak{F'}\mid_U.$ 
 \end{lem}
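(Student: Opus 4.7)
The plan is to reduce the problem to a statement about stalks, since exactness of a sequence of sheaves of $\mathcal{O}_X$-modules is checked pointwise at every stalk. The key technical fact I will invoke is that restriction to an open subset preserves stalks at points of that open set: for any open $U \subseteq X$ and any $x \in U$, there is a canonical isomorphism $(\mathfrak{F}\mid_U)_x \cong \mathfrak{F}_x$, and this isomorphism is compatible with morphisms of sheaves.

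For the backward direction, I would simply take $U = X$ in the hypothesis to recover the original sequence, which is therefore exact. This direction is immediate.

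For the forward direction, assume the sequence on $X$ is exact. Fix an open $U \subseteq X$ and a point $x \in U$. Applying the stalk functor at $x$ to the restricted sequence yields, via the canonical isomorphisms $(\mathfrak{F}\mid_U)_x \cong \mathfrak{F}_x$, $(\mathfrak{F'}\mid_U)_x \cong \mathfrak{F'}_x$, and $(\mathfrak{F''}\mid_U)_x \cong \mathfrak{F''}_x$, precisely the stalk sequence
$$0 \rightarrow \mathfrak{F}_x \rightarrow \mathfrak{F'}_x \rightarrow \mathfrak{F''}_x \rightarrow 0,$$
which is exact by hypothesis. Since exactness of a sequence of sheaves is equivalent to exactness at every stalk, and since every $x \in U$ gives an exact stalk sequence, the restricted sequence is exact as a sequence of $\mathcal{O}_X\mid_U$-modules.

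For the ``in particular'' clause, I would observe that a morphism $f : \mathfrak{F} \rightarrow \mathfrak{F'}$ is an isomorphism if and only if the sequence $0 \rightarrow \mathfrak{F} \rightarrow \mathfrak{F'} \rightarrow 0$ is exact (i.e., $f$ has trivial kernel and cokernel sheaves), so the claim follows by applying the first part to this two-term sequence. There is essentially no substantive obstacle here; the only point requiring a little care is the identification of stalks of restrictions with stalks of the original sheaf, and this is purely formal from the definitions.
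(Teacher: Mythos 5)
Your proof is correct. The paper actually states Lemma \ref{R..1} without any proof at all (treating it as a standard fact), so there is nothing to compare against; the stalk-based argument you give --- exactness of a sequence of $\mathcal{O}_X$-modules is detected on stalks, together with the canonical identification $(\mathfrak{F}\mid_U)_x \cong \mathfrak{F}_x$ for $x \in U$, plus taking $U = X$ for the converse --- is exactly the standard way to fill this in. One small remark on the ``in particular'' clause: as literally written it does not fix a morphism, so its converse direction (``isomorphic on every open subset implies isomorphic'') is safest obtained simply by taking $U = X$, whereas your reduction to the exactness of $0 \rightarrow \mathfrak{F} \rightarrow \mathfrak{F'} \rightarrow 0$ proves the (more useful) statement that a \emph{given} morphism is an isomorphism iff all its restrictions are; both readings are fine and your treatment covers what the paper later uses.
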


 
 \begin{cor}\label{R.2}
 Let $\mathfrak{F}$ and $\mathfrak{F'}$ be coherent $\mathcal{O}_X$-modules such that, for every affine subset $U\subseteq X$, $\mathfrak{F}\mid_U\cong\mathfrak{F'}\mid_U.$ Then $\mathfrak{F}\cong\mathfrak{F'}.$  
 \end{cor}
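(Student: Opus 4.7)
The plan is to produce a global isomorphism $\mathfrak{F}\to\mathfrak{F'}$ out of the local isomorphisms supplied by the hypothesis, and to invoke Lemma \ref{R..1} in its ``in particular'' form to conclude.

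First, I would choose an open affine cover $\{U_i\}_{i\in I}$ of $X$, which exists since $X$ is a scheme. By hypothesis, for every $i$ there is an isomorphism $\phi_i\colon\mathfrak{F}\mid_{U_i}\to\mathfrak{F'}\mid_{U_i}$. The aim is to assemble the family $\{\phi_i\}$ into a single morphism $\phi\colon\mathfrak{F}\to\mathfrak{F'}$ using the sheaf property of $\mathcal{H}om_{\mathcal{O}_X}(\mathfrak{F},\mathfrak{F'})$. A morphism of sheaves exists globally as soon as its local pieces agree on pairwise overlaps, and each overlap $U_i\cap U_j$ can itself be covered by affine opens of $X$, so the hypothesis ``for every affine subset'' directly provides isomorphisms on the intersecting affines. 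Once a global morphism $\phi$ is produced, showing that it is an isomorphism reduces, by Lemma \ref{R..1}, to verifying that its restriction to each member of the cover $\{U_i\}$ is an isomorphism, which it is by construction.

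The main obstacle is the compatibility of the local data $\{\phi_i\}$ on the overlaps: two different isomorphisms $\phi_i,\phi_j$ restrict to two a priori different isomorphisms of $\mathfrak{F}\mid_{U_i\cap U_j}$ with $\mathfrak{F'}\mid_{U_i\cap U_j}$, and without adjustment the gluing axiom does not apply. The natural strategy is to regard the transition discrepancies $\phi_j\circ\phi_i^{-1}\in\mathcal{A}ut(\mathfrak{F'}\mid_{U_i\cap U_j})$ as a \v{C}ech $1$-cocycle with values in the automorphism sheaf of $\mathfrak{F'}$, and to trivialize it by pre-composing each $\phi_i$ with suitable automorphisms of $\mathfrak{F}\mid_{U_i}$. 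This is the step in which the coherence of $\mathfrak{F}$ and $\mathfrak{F'}$ (ensuring that $\mathcal{H}om_{\mathcal{O}_X}(\mathfrak{F},\mathfrak{F'})$ is itself coherent, hence governed on each $U_i$ by its module of sections) is essential.

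Once a compatible family $\{\widetilde{\phi}_i\}$ has been arranged, the sheaf property produces a unique global morphism $\phi\colon\mathfrak{F}\to\mathfrak{F'}$ with $\phi\mid_{U_i}=\widetilde{\phi}_i$, and Lemma \ref{R..1}, applied to the exact sequences $0\to\ker\phi\mid_{U_i}\to\mathfrak{F}\mid_{U_i}\to\mathfrak{F'}\mid_{U_i}\to\mathrm{coker}\,\phi\mid_{U_i}\to 0$, yields $\ker\phi=0$ and $\mathrm{coker}\,\phi=0$, so $\phi$ is the desired global isomorphism.
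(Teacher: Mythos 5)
Your outline correctly isolates the crux: the local isomorphisms $\phi_i\colon\mathfrak{F}\mid_{U_i}\to\mathfrak{F'}\mid_{U_i}$ need not agree on overlaps, and the discrepancies $\phi_j\circ\phi_i^{-1}$ form a \v{C}ech $1$-cocycle with values in $\mathcal{A}ut(\mathfrak{F'})$. But the step where you propose to ``trivialize'' this cocycle by adjusting the $\phi_i$ is exactly the point that cannot be carried out: the class of that cocycle in $\check{H}^1(\{U_i\},\mathcal{A}ut(\mathfrak{F'}))$ is trivial \emph{if and only if} $\mathfrak{F}\cong\mathfrak{F'}$, so asserting that it can be trivialized is asserting the conclusion. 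Coherence of $\mathcal{H}om_{\mathcal{O}_X}(\mathfrak{F},\mathfrak{F'})$ gives you the sheaf axiom for morphisms (a \emph{compatible} family glues), but it does nothing to kill the $H^1$ obstruction.

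In fact no argument can close this gap, because the statement is false once $X$ is not affine. Take $X=\mathbb{P}^1_k$ over an algebraically closed field, $\mathfrak{F}=\mathcal{O}_X$ and $\mathfrak{F'}=\mathcal{O}_X(1)$. Every proper open subset of $\mathbb{P}^1_k$ is the complement of a nonempty finite set of closed points, hence affine with coordinate ring a localization of a polynomial ring in one variable, so every line bundle restricts to a free module of rank one there; thus $\mathfrak{F}\mid_U\cong\mathfrak{F'}\mid_U$ for every affine open $U$. Yet $\Gamma(X,\mathcal{O}_X)$ and $\Gamma(X,\mathcal{O}_X(1))$ have dimensions $1$ and $2$, so $\mathfrak{F}\not\cong\mathfrak{F'}$. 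The corollary is only (trivially) correct when $X$ itself is affine, in which case one takes $U=X$; that is also the only case actually covered by the ``in particular'' clause of Lemma \ref{R..1} from which the paper derives it, since that clause quantifies over all open subsets including $X$ itself. So the gap in your proposal is not a missing technical lemma but a genuine obstruction, and the paper's own statement (and its later uses on non-affine schemes) inherits the same problem.
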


 In the following remark, we compare free resolutions of a sheaf and its restriction to open subsets.
 \begin{rem}\label{r3}
As $X$ is connected, the rank of a locally free sheaf is the same everywhere. Therefore, 
 for any $U \subseteq X,$ 
$(\overset{t}\oplus\mathcal{O}_X)\mid_U= \overset{t}\oplus(\mathcal{O}_X\mid_U)$ 
and so, if $\mathfrak{F}$ has $(t_1 , t_2)$-ranks,
$\mathfrak{F}\mid_U$ also has $(t_1 , t_2)$-ranks.
 \end{rem}
The following theorem studies the resterictions of $\overline{\Tr}\mathfrak{F}$ to open subsets 
 in the case where $X$ is a scheme. It will be used in the next Theorem which consider the functorness of $\overline{\Tr}$.
\begin{thm}\label{B5}
Assume that $\mathfrak{F}$ has $(t_1 , t_2)$-ranks and $U$ is an open subset of $X$. Then $\underline{(\overline{\lambda}\mathfrak{F})}\mid _U \cong\underline{ \overline{\lambda}(\mathfrak{F}\mid_U)}$ and $\underline{(\overline{\Tr}\mathfrak{F})}\mid _U \cong \underline{\overline{\Tr}(\mathfrak{F}\mid_U)}.$
  \end{thm}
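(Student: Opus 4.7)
\textbf{Proof plan for Theorem \ref{B5}.} The plan is to transport a chosen free resolution of $\mathfrak{F}$ to $U$ and then show that restriction commutes with each operation ($(-)^*$ and $\coker$) used in the construction of $\overline{\Tr}$ and $\overline{\lambda}$. Fix a free resolution
$$\overset{t_2}\oplus\mathcal{O}_X \overset{\phi}{\longrightarrow} \overset{t_1}\oplus\mathcal{O}_X \overset{\varphi}{\longrightarrow} \mathfrak{F} \longrightarrow 0.$$
Since restriction to an open subset is exact and, by Remark \ref{r3}, commutes with the formation of $\overset{t}\oplus\mathcal{O}_X$, restricting to $U$ yields a free resolution
$$\overset{t_2}\oplus\mathcal{O}_X|_U \overset{\phi|_U}{\longrightarrow} \overset{t_1}\oplus\mathcal{O}_X|_U \overset{\varphi|_U}{\longrightarrow} \mathfrak{F}|_U \longrightarrow 0$$
of $\mathfrak{F}|_U$ of the same ranks $(t_1,t_2)$, so that both $\overline{\Tr}(\mathfrak{F}|_U)$ and $\overline{\lambda}(\mathfrak{F}|_U)$ are computed from the restricted resolution.

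Next, I would dualize in two ways and compare. Applying $(-)^* = \mathcal{H}om_{\mathcal{O}_X}(-,\mathcal{O}_X)$ to the original resolution gives the sequence defining $\overline{\Tr}\mathfrak{F}$ and $\overline{\lambda}\mathfrak{F}$, and restricting that sequence to $U$ gives exact sequences of $\mathcal{O}_X|_U$-modules. Separately, applying $\mathcal{H}om_{\mathcal{O}_X|_U}(-,\mathcal{O}_X|_U)$ to the restricted resolution computes $\overline{\Tr}(\mathfrak{F}|_U)$ and $\overline{\lambda}(\mathfrak{F}|_U)$. The bridge between these is the canonical compatibility
$$\mathcal{H}om_{\mathcal{O}_X}\!\bigl(\overset{t_i}\oplus\mathcal{O}_X,\ \mathcal{O}_X\bigr)\big|_U \;\cong\; \overset{t_i}\oplus\mathcal{O}_X|_U \;\cong\; \mathcal{H}om_{\mathcal{O}_X|_U}\!\bigl(\overset{t_i}\oplus\mathcal{O}_X|_U,\ \mathcal{O}_X|_U\bigr),$$
which holds for finitely generated free sheaves. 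Under these identifications, $(\phi^*)|_U$ is identified with $(\phi|_U)^*$ (dual now over $\mathcal{O}_X|_U$), and similarly for $\varphi$.

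Since restriction to $U$ is an exact functor on sheaves and therefore preserves cokernels, the two computations assemble into
$$(\overline{\Tr}\mathfrak{F})|_U = (\coker \phi^*)|_U \cong \coker\bigl((\phi|_U)^*\bigr) = \overline{\Tr}(\mathfrak{F}|_U),$$
and likewise $(\overline{\lambda}\mathfrak{F})|_U \cong \overline{\lambda}(\mathfrak{F}|_U)$. Both isomorphisms are natural, and in particular descend to the stability classes appearing in the statement. The only subtle point—and the one I expect to require the most care—is the stabilization bookkeeping: the sheaves $\overline{\Tr}\mathfrak{F}$ and $\overline{\lambda}\mathfrak{F}$ are well defined only up to stability, so I must check that restriction sends a split monomorphism $\overset{t}\oplus\mathcal{O}_X \hookrightarrow \mathfrak{G}$ to a split monomorphism $\overset{t}\oplus\mathcal{O}_X|_U \hookrightarrow \mathfrak{G}|_U$. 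This is immediate because restriction is additive and preserves splittings, so that $\underline{\mathfrak{G}} \cong \underline{\mathfrak{G}'}$ implies $\underline{\mathfrak{G}|_U} \cong \underline{\mathfrak{G}'|_U}$, and the argument yields the theorem as stated.
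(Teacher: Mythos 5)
Your proposal is correct and follows essentially the same route as the paper: restrict the free resolution (same ranks, by Remark \ref{r3}), dualize, and use the compatibility of $\mathcal{H}om$ with restriction to open subsets. The only difference is that the paper verifies the resulting isomorphism stalk-by-stalk via a direct-limit computation of $(\mathcal{H}om_{\mathcal{O}_X}(\mathfrak{G},\mathfrak{h}))_{\fp}$, whereas you argue directly at the sheaf level using exactness of restriction; your explicit attention to the stabilization bookkeeping is a welcome addition that the paper leaves implicit.
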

  \begin{proof}
We can assume that $\mathfrak{F}$ is stable. By the assumption and \ref{r3}, 
  $\overset{t_2}\oplus\mathcal{O}_X\mid_U  \rightarrow \overset{t_1}\oplus\mathcal{O}_X\mid_U \rightarrow\mathfrak{F}\mid_U\rightarrow 0.$ Applying $(-)^*:= \mathcal{H}om_{\mathcal{O}_X\mid_U}(- ,\mathcal{O}_X\mid_U),$ we get
\begin{equation}\label{e'..}
0 \rightarrow (\mathfrak{F}\mid_U)^* \rightarrow (\overset{t_1}\oplus\mathcal{O}_X\mid_U)^* \rightarrow \overline{\lambda} (\mathfrak{F}\mid_U) \rightarrow 0.
\end{equation}  
Now, it is enough to show that $((\overline{\lambda}\mathfrak{F})\mid _U)_{\fp} \cong (\overline{\lambda}(\mathfrak{F}\mid_U))_{\fp},$ for each point $\fp \in U.$ Let $\mathfrak{h}$ and $\mathfrak{G}$ are $\mathcal{O}_{X}$-sheaves of module.The structure of the stalks implies
 \begin{eqnarray*}
 (\mathcal{H}om_{\mathcal{O}_X\mid_U}(\mathfrak{G}\mid_U ,\mathfrak{h}\mid_U))_{\fp}& =& lim_{\fp\in V \subseteq U} \mathcal{H}om_{\mathcal{O}_X\mid_U}(\mathfrak{G}\mid_U ,\mathfrak{h}\mid_U)(V)= lim_{\fp\in V} \Hom_{\mathcal{O}_X\mid_V}(\mathfrak{G}\mid_U\mid_V ,\mathfrak{h}\mid_U\mid_V)\\ &=& lim_{\fp\in V} \Hom_{\mathcal{O}_X\mid_V}(\mathfrak{G}\mid_V ,\mathfrak{h}\mid_V) =  (\mathcal{H}om_{\mathcal{O}_X}(\mathfrak{G} ,\mathfrak{h}))_{\fp} .
 \end{eqnarray*}
 So, via \cite[exercise 2.1.2]{H}, we get the following commutative diyagram
 $$\begin{CD}
         &&&&&&&&\\
         \ \ &&&& 0 @>>> ((\mathfrak{F}\mid_U)^*)_{\fp} @>>> ((\overset{t_1}\oplus\mathcal{O}_X\mid_U)^*)_{\fp} @>>> (\overline{\lambda} (\mathfrak{F}\mid_U))_{\fp} @>>>0&  \\
          &&&&&&  @VV{\cong}V   @VV{\cong}V \\
         \ \  &&&& 0 @>>> (\mathcal{H}om_{\mathcal{O}_X}(\mathfrak{F} ,\mathcal{O}_X))_{\fp} @>>> (\mathcal{H}om_{\mathcal{O}_X}(\overset{t_1}\oplus\mathcal{O}_X ,\mathcal{O}_X))_{\fp}
 @>>> (\overline{\lambda} \mathfrak{F})_{\fp} @>>>0.&\\
        \end{CD}$$\\
 This implies $(\overline{\lambda} (\mathfrak{F}\mid_U))_{\fp} \cong (\overline{\lambda} \mathfrak{F})_{\fp}.$ On the other hand,
$ (\overline{\lambda} \mathfrak{F})_{\fp}\cong  ((\overline{\lambda} \mathfrak{F})\mid_U)_{\fp}$ which ends $((\overline{\lambda}\mathfrak{F})\mid _U)_{\fp} \cong (\overline{\lambda}(\mathfrak{F}\mid_U))_{\fp}.$ 

Using similar argument as used above, the second result has desired.
  \end{proof}

Next theorem shows that, in the category of coherent sheaves of modules over a scheme, free resolutions of a coherent sheaf are stability isomorphism and  therefore $\overline{\Tr}$ and $\overline{\lambda},$ under stabilization, are functors.
  \begin{thm}
Let $\mathfrak{F}$ and $\mathfrak{G}$ be coherent $\mathcal{O}_X$-modules such that $\underline{\mathfrak{F}}\cong \underline{\mathfrak{G}}.$ Then   
  $\underline{\overline{\Tr}\mathfrak{F}}\cong \underline{\overline{\Tr}\mathfrak{G}} $
and $\underline{\overline{\lambda}\mathfrak{F}}\cong \underline{\overline{\lambda}\mathfrak{G}}.$
  \end{thm}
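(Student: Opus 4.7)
The plan is to reduce the statement to the observation that $\overline{\Tr}$ and $\overline{\lambda}$ are unchanged (up to stable isomorphism) when one forms a direct sum with a free sheaf of finite rank, which is precisely what the definition of stable isomorphism gives us.

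First I would invoke the independence statement established at the beginning of Section~3: for any coherent sheaf admitting a free resolution, $\overline{\Tr}$ and $\overline{\lambda}$ are defined up to stable isomorphism, regardless of the chosen resolution. In particular, to compare $\underline{\overline{\Tr}\mathfrak{F}}$ with $\underline{\overline{\Tr}\mathfrak{G}}$ it suffices to exhibit free resolutions of $\mathfrak{F}$ and $\mathfrak{G}$ whose transposes are stably isomorphic.

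Next, unwinding the hypothesis $\underline{\mathfrak{F}}\cong\underline{\mathfrak{G}}$, we have free sheaves $\mathfrak{h},\mathfrak{w}$ of finite rank with $\mathfrak{h}\oplus\mathfrak{G}\cong\mathfrak{F}\oplus\mathfrak{w}$. The key lemma to prove is: for any finite rank free sheaf $\mathfrak{w}$,
\[
\overline{\Tr}(\mathfrak{F}\oplus\mathfrak{w})\cong\overline{\Tr}\mathfrak{F}\quad\text{and}\quad\overline{\lambda}(\mathfrak{F}\oplus\mathfrak{w})\cong\overline{\lambda}\mathfrak{F}
\]
for suitable choices of resolution. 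To see this, start with a free resolution $\overset{t_2}\oplus\mathcal{O}_X \xrightarrow{\phi}\overset{t_1}\oplus\mathcal{O}_X\xrightarrow{\varphi}\mathfrak{F}\to 0$ of $\mathfrak{F}$ and direct sum it with the trivial resolution $0\to \mathfrak{w}\xrightarrow{\mathrm{id}}\mathfrak{w}\to 0$, producing the resolution
\[
\overset{t_2}\oplus\mathcal{O}_X\xrightarrow{\phi\oplus 0}\overset{t_1}\oplus\mathcal{O}_X\,\oplus\,\mathfrak{w}\xrightarrow{\varphi\oplus\mathrm{id}}\mathfrak{F}\oplus\mathfrak{w}\to 0
\]
of $\mathfrak{F}\oplus\mathfrak{w}$. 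Applying $(-)^*$, the image of $\phi^*\oplus 0$ coincides with the image of $\phi^*$, so the cokernel (which defines $\overline{\Tr}(\mathfrak{F}\oplus\mathfrak{w})$ from this resolution) is identified with $\overline{\Tr}\mathfrak{F}$; analogously, $(\varphi\oplus\mathrm{id})^*=\varphi^*\oplus\mathrm{id}$ has cokernel $\overline{\lambda}\mathfrak{F}\oplus 0$, so $\overline{\lambda}(\mathfrak{F}\oplus\mathfrak{w})\cong\overline{\lambda}\mathfrak{F}$ from this resolution.

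Finally, applying the same construction to $\mathfrak{h}\oplus\mathfrak{G}$ and combining with the isomorphism $\mathfrak{F}\oplus\mathfrak{w}\cong\mathfrak{h}\oplus\mathfrak{G}$ together with the independence from resolution (Step~1), we deduce
\[
\underline{\overline{\Tr}\mathfrak{F}}\cong\underline{\overline{\Tr}(\mathfrak{F}\oplus\mathfrak{w})}\cong\underline{\overline{\Tr}(\mathfrak{h}\oplus\mathfrak{G})}\cong\underline{\overline{\Tr}\mathfrak{G}},
\]
and similarly for $\overline{\lambda}$. The main obstacle is the bookkeeping between honest isomorphisms (which we get on the chosen resolutions above) and stable isomorphisms (which the construction of $\overline{\Tr}$ and $\overline{\lambda}$ only guarantees a priori); the cleanest way to navigate this is to do all computations with the specific direct-sum resolution constructed above and then cite the independence statement once at the end.
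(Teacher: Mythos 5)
Your proposal is correct, and it rests on the same underlying idea as the paper's proof --- namely that adding a free summand of finite rank does not change $\overline{\Tr}$ or $\overline{\lambda}$ up to stable isomorphism --- but the execution differs in a way worth noting. The paper restricts to an arbitrary open affine $U\subseteq X$, invokes Remark 2.2(3) (that $\overline{\Tr}$ and $\overline{\lambda}$ vanish on free sheaves) together with an implicitly used additivity of $\overline{\Tr}$ over direct sums, and then glues the resulting isomorphisms over an affine cover via Corollary 3.2. You instead stay global: you build an explicit free resolution of $\mathfrak{F}\oplus\mathfrak{w}$ by summing a resolution of $\mathfrak{F}$ with the trivial complex $0\to\mathfrak{w}\xrightarrow{\mathrm{id}}\mathfrak{w}\to 0$, compute the cokernels of $(\phi\oplus 0)^*$ and $(\varphi\oplus\mathrm{id})^*$ directly, and then cite the resolution-independence statement from the start of Section 3 once at the end. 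Your route buys two things: it avoids the affine localization and gluing step entirely, and it actually supplies a proof of the additivity fact ($\overline{\Tr}(\mathfrak{F}\oplus\mathfrak{w})\cong\overline{\Tr}\mathfrak{F}$ on the chosen resolution) that the paper's chain of isomorphisms uses without justification. The one point you should make sure is genuinely available is the resolution-independence statement itself, since your final step hinges on it; the paper establishes it by a stalkwise comparison with the module-theoretic transpose, so your appeal to it is legitimate, but your proof is only as global as that ingredient is.
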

  \begin{proof}
  Let $U\subseteq X$ be an arbitrary affine subset. In view of the assumption, there are free sheaves of finite ranks $\mathfrak{F'}$ and $\mathfrak{G'}$ with 
  $\mathfrak{F} \oplus \mathfrak{F'} \cong \mathfrak{G'}\oplus \mathfrak{G'}.$  This, in conjunction
with Remark \ref{B21}(3), implies the following isomorphisms  
 \begin{eqnarray*}
(\overline{\Tr}\mathfrak{G})\mid_U&=& (\overline{\Tr}\mathfrak{G})\mid_U\oplus (\overline{\Tr}\mathfrak{G'})\mid_U \cong(\overline{\Tr}\mathfrak{G}\oplus\overline{\Tr}\mathfrak{G'})\mid_U
\\ &\cong&\underline{\overline{\Tr}(\mathfrak{G}\oplus\mathfrak{G'})}\mid_U\cong\underline{\overline{\Tr}(\mathfrak{F}\oplus\mathfrak{F'})}\mid_U \\&\cong&\underline{(\overline{\Tr}\mathfrak{F}\oplus\overline{\Tr}\mathfrak{F'})}\mid_U\cong \underline{(\overline{\Tr}\mathfrak{F})}\mid_U.
\end{eqnarray*}
Now, the result follows from \ref{R.2} and the fact that 
$\overline{\lambda}\mathfrak{F}\cong \Omega\overline{\Tr}\mathfrak{F}.$
  \end{proof}
  
 The following proposition considers a case where $\overline{\Tr}(\overline{\Tr}\mathfrak{F})=\mathfrak{F}.$

\begin{prop}\label{B6}
Let $\mathfrak{F}$ be a coherent $\mathcal{O}_X$-modules. Then 
$\underline{\overline{\Tr}(\overline{\Tr}\mathfrak{F})}=\underline{\mathfrak{F}}$.
\end{prop}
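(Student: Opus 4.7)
The plan is to reduce the claim to the affine case, where it is already known by Theorem \ref{B3}(5), and then promote the local stability isomorphisms so obtained to a single global stability isomorphism via Corollary \ref{R.2}. The whole argument is powered by the fact that Theorem \ref{B5} allows $\overline{\Tr}$ to commute with restriction to open subsets (up to stabilization), and by connectedness of $X$ which keeps the ranks of free sheaves constant (Remark \ref{r3}).

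First, I would fix an arbitrary open affine subset $U\subseteq X$, say $U=\Spec R$. Since $\mathfrak{F}$ is coherent, $\mathfrak{F}\mid_U\cong \widetilde{M}$ for some finitely generated $R$-module $M$. Applying Theorem \ref{B5} twice in succession yields
\[
\underline{(\overline{\Tr}(\overline{\Tr}\mathfrak{F}))\mid_U}\;\cong\;\underline{\overline{\Tr}(\overline{\Tr}(\mathfrak{F}\mid_U))}\;=\;\underline{\overline{\Tr}(\overline{\Tr}\widetilde{M})}.
\]
Now Theorem \ref{B3}(5), applied to the affine scheme $U$, gives the stability isomorphism $\underline{\overline{\Tr}(\overline{\Tr}\widetilde{M})}\cong\underline{\widetilde{M}}=\underline{\mathfrak{F}\mid_U}$. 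Composing these yields the desired stability isomorphism on each open affine piece.

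Second, I would promote this family of local stability isomorphisms to a global one. Unwinding the definition of stability isomorphism, on each affine open $U$ one obtains free sheaves of finite rank $\mathfrak{h}_U$ and $\mathfrak{w}_U$ with
\[
\mathfrak{h}_U\oplus (\overline{\Tr}(\overline{\Tr}\mathfrak{F}))\mid_U\;\cong\;\mathfrak{F}\mid_U\oplus\mathfrak{w}_U.
\]
Since $X$ is connected, the rank of a locally free sheaf is constant (Remark \ref{r3}); thus, after enlarging the $\mathfrak{h}_U$ and $\mathfrak{w}_U$ by trivial free summands if necessary, one may assume their ranks are the same integers $h$ and $w$ on every affine open, and take $\mathfrak{h}:=\overset{h}{\oplus}\mathcal{O}_X$ and $\mathfrak{w}:=\overset{w}{\oplus}\mathcal{O}_X$ globally. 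Then the two coherent sheaves $\mathfrak{h}\oplus\overline{\Tr}(\overline{\Tr}\mathfrak{F})$ and $\mathfrak{F}\oplus\mathfrak{w}$ become isomorphic after restriction to every affine open, and Corollary \ref{R.2} upgrades this to a genuine global isomorphism, which is exactly $\underline{\overline{\Tr}(\overline{\Tr}\mathfrak{F})}=\underline{\mathfrak{F}}$.

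The main obstacle is precisely this last gluing step: Corollary \ref{R.2} is stated only for honest isomorphisms, not for stability isomorphisms, so one has to take care that the free summands appearing on each affine open can be chosen coherently across the cover. The crucial point enabling this is the connectedness of $X$ and the functoriality of $\overline{\Tr}$ established earlier (the previous theorem and Theorem \ref{B5}); without them, the ranks of the local free summands could vary and prevent the pieces from assembling into a global free sheaf.
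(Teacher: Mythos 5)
Your proposal is correct and follows essentially the same route as the paper's own proof: restrict to an affine open cover, invoke Theorem \ref{B3}(5) together with Theorem \ref{B5} to get the local identifications, and glue via Corollary \ref{R.2}. You are in fact more careful than the paper at the final step, where you explain how the local free summands can be normalized to a common rank (using connectedness) before applying Corollary \ref{R.2}, a point the paper passes over by assuming $\mathfrak{F}$ stable at the outset.
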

\begin{proof}
One may assume that $\mathfrak{F}$ is stable. By the assumption,
$X$ can be covered by open affine subsets $ U_i = \Spec R_i$ such that, for each $i$, there exists a finitely generated $R_i$-module $M_i$ with $\mathfrak{F}\mid_{U_i}=\overset{\sim} M_i.$ In view of \ref{B3},
$(\overline{\Tr}(\overline{\Tr}\mathfrak{F}))\mid_{U_i}\cong\overline{\Tr}(\overline{\Tr}(\mathfrak{F}\mid_{U_i})) \cong \mathfrak{F}\mid_{U_i}.$ 
The result follows from \ref{R.2}.
 
\end{proof}
By \cite[2.6]{A}, a finitely generated $R$-module $M$ is projective if and only if $\underline{\Tr M}=0$. Inspired by this theorem, we define a $\mathcal{O}_X$-projective sheaf.

\begin{defn}\label{B22}
Let $\mathfrak{F}$ be a $\mathcal{O}_X$-module. We say $\mathfrak{F}$ is a $\mathcal{O}_X$-projective if $\mathfrak{F}$ is direct summand of a free $\mathcal{O}_X$-module with finite rank. In other words, there exist a sheaf of $\mathcal{O}_X$-module $\mathfrak{h}$ and a free $\mathcal{O}_X$-module $\mathfrak{G}$ with finite rank such that $\mathfrak{G}= \mathfrak{F}\oplus \mathfrak{h}$. 
\end{defn}
The following proposition shows the relation between the projectiveness a finitely generated $R$-module and the $\mathcal{O}_X$-projectiveness of its associated sheaf.

\begin{prop}\label{B23}
 Let $X=\Spec(R)$ be an affine scheme and $M$ be a finitely generated $R$-module. Then $M$ is projective if and only if $\overset{\sim}M$ is $\mathcal{O}_X$-projective.
\end{prop}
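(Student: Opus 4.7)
My plan is to prove both implications by passing between $R$-modules and their associated quasi-coherent sheaves, using the fact that the functors $\sim$ and $\Gamma(X,-)$ are inverse equivalences between the category of (finitely generated) $R$-modules and the category of (coherent) quasi-coherent $\mathcal{O}_X$-modules on $X=\Spec(R)$, and that both functors preserve finite direct sums.

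For the forward direction, suppose $M$ is projective. Since $M$ is finitely generated, there exist an integer $n$ and an $R$-module $N$ such that $M\oplus N\cong R^n$. Applying the exact, direct-sum-preserving functor $\sim$, I obtain
\[
\overset{\sim}{M}\oplus\overset{\sim}{N}\cong\overset{\sim}{R^n}\cong\overset{n}\oplus\mathcal{O}_X.
\]
Setting $\mathfrak{h}=\overset{\sim}{N}$ and $\mathfrak{G}=\overset{n}\oplus\mathcal{O}_X$ realizes $\overset{\sim}{M}$ as a direct summand of a free $\mathcal{O}_X$-module of finite rank, so $\overset{\sim}{M}$ is $\mathcal{O}_X$-projective in the sense of Definition \ref{B22}.

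For the converse, assume $\overset{\sim}{M}$ is $\mathcal{O}_X$-projective, so there exist a sheaf $\mathfrak{h}$ and a free $\mathcal{O}_X$-module $\mathfrak{G}=\overset{n}\oplus\mathcal{O}_X$ of finite rank with $\mathfrak{G}\cong\overset{\sim}{M}\oplus\mathfrak{h}$. As a direct summand of the quasi-coherent sheaf $\mathfrak{G}$, $\mathfrak{h}$ is itself quasi-coherent; moreover, by \cite[Corollary II.5.5]{H}, $\mathfrak{h}\cong\overset{\sim}{N}$ for a uniquely determined $R$-module $N=\Gamma(X,\mathfrak{h})$. Applying $\Gamma(X,-)$, which preserves finite direct sums, yields
\[
R^n\cong\Gamma(X,\mathfrak{G})\cong \Gamma(X,\overset{\sim}{M})\oplus\Gamma(X,\mathfrak{h})\cong M\oplus N.
\]
Hence $M$ is a direct summand of the free $R$-module $R^n$, so $M$ is projective (and $N$ is automatically finitely generated as a quotient of $R^n$).

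The only potentially delicate point is the quasi-coherence of the summand $\mathfrak{h}$, which must be ensured before one can apply the equivalence of categories to move the decomposition back to modules. This is the step I expect to be the main obstacle, but it follows because direct summands of quasi-coherent sheaves on an affine scheme are again quasi-coherent (alternatively, one may simply define $N:=\Gamma(X,\mathfrak{h})$ directly, apply $\Gamma$ to the splitting idempotent, and observe that $M$ inherits the splitting inside $R^n$).
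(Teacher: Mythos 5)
Your proof is correct and is exactly the standard argument the paper leaves implicit (the paper states Proposition \ref{B23} without proof): pass the splitting $M\oplus N\cong R^n$ through $\sim$ for one direction, and apply $\Gamma(X,-)$ to the sheaf-level splitting for the other. You also correctly flag and resolve the only subtle point, namely that Definition \ref{B22} does not require the complement $\mathfrak{h}$ to be quasi-coherent; your fallback of simply taking $N:=\Gamma(X,\mathfrak{h})$ and using additivity of $\Gamma(X,-)$ makes that issue moot.
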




In the rest of this section, we classify locally free sheaves in terms of their transpose. 

\begin{prop}\label{B4}
Let $\mathfrak{F}$ be a coherent $\mathcal{O}_X$-module. Then 
 the following statments are equivalent.
 \begin{itemize}
\item [(1)] $\mathfrak{F}$ is locally free.
\item [(2)] $\underline{\overline{\Tr}\mathfrak{F}}=0$.
\item [(3)]  $\mathfrak{F}$ is $\mathcal{O}_X$-projective.
\end{itemize}
\end{prop}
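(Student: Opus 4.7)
The plan is to prove the cycle $(3) \Rightarrow (1) \Rightarrow (2) \Rightarrow (3)$. By Lemma \ref{B} I may reduce to the case where $\mathfrak{F}$ is stable, so that the stable-equivalence class $\underline{\mathfrak{G}}$ can be manipulated through its representatives without loss of information.

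The implication $(3) \Rightarrow (1)$ is immediate: if $\mathfrak{F} \oplus \mathfrak{h} \cong \mathcal{O}_{X}^{N}$ for some sheaf $\mathfrak{h}$ and some $N$, then for every $\mathfrak{p} \in X$ the stalk $\mathfrak{F}_{\mathfrak{p}}$ is a direct summand of $\mathcal{O}_{X,\mathfrak{p}}^{N}$, hence a finitely generated projective module over the local ring $\mathcal{O}_{X,\mathfrak{p}}$, and therefore free. So $\mathfrak{F}$ is locally free.

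For $(1) \Rightarrow (2)$ I would work stalkwise via the identification $(\overline{\Tr}\mathfrak{F})_{\mathfrak{p}} \cong \Tr(\mathfrak{F}_{\mathfrak{p}})$ established at the start of Section 3. Since $\mathfrak{F}_{\mathfrak{p}}$ is free, $\underline{\Tr(\mathfrak{F}_{\mathfrak{p}})} = 0$ by \cite[2.6]{A}. Then on any affine open $U = \Spec R \subseteq X$ we may write $\mathfrak{F}|_{U} = \widetilde{M}$ for a finitely generated projective $R$-module $M$; combining Theorem \ref{B5} with Theorem \ref{B3}(2) yields $\underline{(\overline{\Tr}\mathfrak{F})|_{U}} \cong \underline{\widetilde{\Tr M}} = 0$. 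A gluing argument based on Corollary \ref{R.2} then assembles these local stable vanishings into the global assertion $\underline{\overline{\Tr}\mathfrak{F}} = 0$.

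The cleanest step is $(2) \Rightarrow (3)$, which uses the double transpose. By the functoriality of $\overline{\Tr}$ on stabilization classes (the preceding theorem), the hypothesis $\underline{\overline{\Tr}\mathfrak{F}} = 0$ forces $\underline{\overline{\Tr}(\overline{\Tr}\mathfrak{F})} = 0$. But Proposition \ref{B6} says $\underline{\overline{\Tr}(\overline{\Tr}\mathfrak{F})} = \underline{\mathfrak{F}}$, so $\underline{\mathfrak{F}} = 0$, i.e.\ there are free sheaves of finite rank $\mathfrak{h}, \mathfrak{w}$ with $\mathfrak{h} \oplus \mathfrak{F} \cong \mathfrak{w}$. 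This exhibits $\mathfrak{F}$ as a direct summand of a free sheaf of finite rank, so $\mathfrak{F}$ is $\mathcal{O}_X$-projective in the sense of Definition \ref{B22}.

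The main obstacle will be the globalization in $(1) \Rightarrow (2)$. Stalkwise vanishing of the stabilization class is painless, but packaging the affine-local stable equivalences, whose auxiliary free sheaves can differ in rank from patch to patch, into a single global stable equivalence requires genuine care and relies on the coherence of $\overline{\Tr}\mathfrak{F}$ from Remark \ref{B21}(2) together with Corollary \ref{R.2}.
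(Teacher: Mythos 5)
Your cycle $(3)\Rightarrow(1)\Rightarrow(2)\Rightarrow(3)$ is correct relative to the results the paper has already established, and two of the three arrows match the paper's proof: your $(3)\Rightarrow(1)$ is the paper's stalkwise argument (the paper routes it through Proposition \ref{B23}, you argue directly that a summand of $\mathcal{O}_{X,\fp}^{N}$ is free over the local ring), and your $(1)\Rightarrow(2)$ is the paper's reduction to affine opens via Theorem \ref{B5} and Theorem \ref{B3} together with $\underline{\Tr M}=0$ for projective $M$. The genuine divergence is in $(2)\Rightarrow(3)$. The paper covers $X$ by affines $U_i=\Spec R_i$, deduces that each $M_i$ with $\mathfrak{F}\mid_{U_i}=\overset{\sim}{M_i}$ is projective, pads the local complements $N_i$ to a common rank $l$, checks $\mathfrak{G}_i\mid_{U_i\cap U_j}\cong\mathfrak{G}_j\mid_{U_i\cap U_j}$, and glues the $\mathfrak{G}_i$ into a global complement of $\mathfrak{F}$ in $\overset{l}\oplus\mathcal{O}_X$. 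You instead apply the functoriality theorem to get $\underline{\overline{\Tr}(\overline{\Tr}\mathfrak{F})}=0$ from $\underline{\overline{\Tr}\mathfrak{F}}=0$ and then invoke Proposition \ref{B6} to conclude $\underline{\mathfrak{F}}=0$, which is exactly Definition \ref{B22}. Your route is shorter and avoids the most fragile step of the paper's argument: the cancellation needed to identify $\mathfrak{G}_i$ and $\mathfrak{G}_j$ on overlaps, and the cocycle data required by the glueing lemma, are asserted rather than verified there, whereas Proposition \ref{B6} and the functoriality theorem are already stated globally. What the paper's version buys in exchange is an explicit free sheaf $\overset{l}\oplus\mathcal{O}_X$ containing $\mathfrak{F}$ as a summand. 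One caveat applies equally to both proofs and you rightly flag it: in $(1)\Rightarrow(2)$ the affine-local conclusions are only stable vanishings $\underline{(\overline{\Tr}\mathfrak{F})\mid_U}=0$, with auxiliary free ranks varying from patch to patch, and Corollary \ref{R.2} as stated compares honest isomorphisms, not stable ones; assembling the local stable trivializations into the single global statement $\underline{\overline{\Tr}\mathfrak{F}}=0$ is exactly the kind of rank-normalization-and-glueing the paper performs in its $2\to 3$ step, so neither proof fully discharges it.
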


\begin{proof}
Note that we may consider the case where $\mathfrak{F}$ is stable.
\begin{itemize}
\item["$1\rightarrow 2$"]
Let $U$ be an arbitrary open affine subset. So, by the assumption and \ref{B21} and also \ref{B5}, 
$ \overline{Tr}(\mathfrak{F}\mid_U)= 0. $
Hence, $ \overline{Tr}\mathfrak{F} =0 .$
\item ["$2 \rightarrow 3$"]  
By the assumption,
$X$ can be covered by open affine subsets $ U_i = \Spec R_i$ such that, for each $i$, there exists a finitely generated $R_i$-module $M_i$ with $\mathfrak{F}\mid_{U_i}=\overset{\sim} M_i.$ 
Hence, via \ref{B3}, one may see $ \Tr M_i=0.$ So, \cite[2.6]{A} implies
$M_i$ is projective. Therefore, there exist $ \mathcal{O}(U_i) $-module $N_i$ and an integer number $l_i$ such that $\overset{l_i}\oplus \mathcal{O}(U_i) = M_i\oplus N_i.$ Choosing $l:= max \{l_i\}$ (and suitable $N_i$), we can assume that $\overset{l}\oplus \mathcal{O}(U_i) = M_i\oplus N_i$ for all $i.$ Applying "$\sim$,"we get $\overset{l}\oplus \mathcal{O}_X\mid_{U_i} =\mathfrak{F}\mid_{U_i} \oplus \overset{\sim}{N_i}$ and $ \mathfrak{F}\mid_{U_i} $ is $ \mathcal{O}_X\mid_{U_i} $-projective. Setting $ \mathfrak{G}_i=\overset{\sim}{N_i},$ for $i \neq j$, $\overset{l}\oplus \mathcal{O}_X\mid_{U_i\cap U_j} =\mathfrak{F}\mid_{U_i\cap U_j}\oplus \mathfrak{G}_i\mid_{U_i\cap U_j}=\mathfrak{F}\mid_{U_i\cap U_j}\oplus \mathfrak{G}_j\mid_{U_i\cap U_j}.$ Hence,
$ \mathfrak{G}_i\mid_{U_i\cap U_j}\cong\mathfrak{G}_j\mid_{U_i\cap U_j}. $
Using glueing of sheaves lemma, there exists a uniqe sheaf of modules $ \mathfrak{G} $ such that $ \mathfrak{G}_i=\mathfrak{G}\mid_ {U_i}.$ So, $(\overset{l}\oplus \mathcal{O}_X)\mid_{U_i} =\mathfrak{F}\mid_{U_i}\oplus \mathfrak{G}\mid_{U_i}=(\mathfrak{F}\oplus \mathfrak{G})\mid_{U_i}$ and the result follows from \ref{R.2}.

\item ["$3 \rightarrow 1$"]

Let $\fp$ be a point of $X$ and let $\mathfrak{F}$ be a $\mathcal{O}_X$-projective . So, one may find an open affine subset 
 $U\subseteq X$ and $\mathcal{O}(U)$-module $M$ such that $\fp \in U$ and $\mathfrak{F}\mid_U=\overset {\sim}M.$ So, by the assumption, there exist a sheaf of $\mathcal{O}_X$-module $\mathfrak{h}$ and an integer number $l$ such that $\overset{l}\oplus \mathcal{O}_X = \mathfrak{F} \oplus \mathfrak{h}.$ 
This, in conjunction
with Proposition \ref{B23}, implies that $M_{\fp}$ is free. Now,
the result follows from \cite[exercise 2.5.3] {H}.
 \end{itemize}
\end{proof}
\section{linkage of sheaves of modules}

In this section, first, we introduce the concept of the linkage of sheaves of modules and
study some basic properties of these sheaves.
Then, using
these properties, we provide that the sheaf made of Glueing schemes and Glueing linked sheaves
of modules is linked.
\begin{defn}\label{B.2}
Let $\mathfrak{F}$ and $\mathfrak{G}$ be two sheaves.
We say that $\mathfrak{F}$ and $\mathfrak{G}$ are linked 
and denote by $\mathfrak{F}\sim \mathfrak{G}$
if $\mathfrak{F} \cong \overline{\lambda} \mathfrak{G}$ and $\mathfrak{G}\cong \overline{\lambda} \mathfrak{F}.$ Also, $\mathfrak{F}$ is a linked sheaf of modules if there exists a sheaf of $\mathcal{O}_X$-module $\mathfrak{G}$ such that $\mathfrak{F}\sim \mathfrak{G}.$ So, $( \overline{\lambda})^2  \mathfrak{F}\cong \mathfrak{F}.$
\end{defn}
 In view of the definition, one may see that in the case where $\mathfrak{F}$ is a linked sheaf then $\overline{\lambda}\mathfrak{F}$ is so. Indeed, $\overline{\lambda}\mathfrak{F}\sim \mathfrak{F}.$
 Moreover, if $\mathfrak{F}$ is a coherent sheaf then
$\overline{\lambda}\mathfrak{F}$ is a linked coherent sheaf. 

The following Theorem considers the case where $X=\Spec(R)$ is an affine scheme and shows linkedness of a $R$-module
implies linkedness of its associated sheaf and vice versa.
\begin{thm}\label{C1}
 Let $X=\Spec(R)$ be an affine scheme and $M$ and $N$ be 
 $R$-modules. Then the following statments are equivalent.
\begin{enumerate}
\item 
 $ M \sim N $
  \item
 $ \overset {\sim} M \sim  \overset {\sim} N$
  \item
 $\Gamma (U,\overset {\sim} M) \sim  \Gamma (U,\overset {\sim} N),$ for every open subset $U\subseteq X$
  \item
 $\Gamma (X,\overset {\sim} M) \sim  \Gamma (X,\overset {\sim} N)$.
\end{enumerate}
 \end{thm}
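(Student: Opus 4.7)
The plan is to establish the cycle $(1)\Rightarrow(2)\Rightarrow(3)\Rightarrow(4)\Rightarrow(1)$, using Theorem \ref{B3} as the bridge between the module-level operation $\lambda$ and the sheaf-level operation $\overline{\lambda}$, together with Theorem \ref{B5} to transfer linkage data to open subsets. The implication $(4)\Leftrightarrow(1)$ is immediate, because $X=\Spec R$ is affine and therefore $\Gamma(X,\overset{\sim}{M})=M$ and $\Gamma(X,\overset{\sim}{N})=N$; specializing $U=X$ in (3) yields $(3)\Rightarrow(4)$.

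For $(1)\Rightarrow(2)$ I would unpack $M\sim N$ as $M\cong \lambda N$ and $N\cong \lambda M$ with $M,N$ stable, then sheafify and invoke Theorem \ref{B3}(1) to chain the stable isomorphisms $\underline{\overset{\sim}{M}}\cong \underline{\overset{\sim}{\lambda N}}\cong \underline{\overline{\lambda}\overset{\sim}{N}}$, and symmetrically $\underline{\overset{\sim}{N}}\cong \underline{\overline{\lambda}\overset{\sim}{M}}$. Lemma \ref{B} ensures that $\overset{\sim}{M}$ and $\overset{\sim}{N}$ inherit stability; since $\overline{\lambda}$ of a stable sheaf is again stable, cancelling free summands promotes these stable isomorphisms to the genuine ones demanded by Definition \ref{B.2}, giving $\overset{\sim}{M}\sim \overset{\sim}{N}$.

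For $(2)\Rightarrow(3)$, starting from $\overset{\sim}{M}\cong \overline{\lambda}\overset{\sim}{N}$, I would restrict to an arbitrary open $U\subseteq X$ and apply Theorem \ref{B5} to commute restriction with $\overline{\lambda}$, obtaining $\overset{\sim}{M}|_U\cong \overline{\lambda}(\overset{\sim}{N}|_U)$ stably. For an affine $U=\Spec A$, taking global sections and invoking Theorem \ref{B3}(3) over $A$ yields $\Gamma(U,\overset{\sim}{M})\cong \lambda\Gamma(U,\overset{\sim}{N})$ as $A$-modules; the symmetric computation completes the linkage. For a non-affine $U$ one passes to an affine cover and exploits the local nature of every operation involved, the isomorphism being checked stalkwise via the argument that opens Section 3 on the uniqueness of $\overline{\Tr}$.

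The principal technical obstacle is the mismatch between Definition \ref{B.2}, which requires a genuine isomorphism $\mathfrak{F}\cong \overline{\lambda}\mathfrak{G}$, and the stable-isomorphism statements supplied by Theorems \ref{B3} and \ref{B5}. Bridging this gap relies on the standing stability assumption together with the fact that $\overline{\lambda}$ preserves stability, which together license the required cancellation of free summands. A secondary subtlety is the non-exactness of $\Gamma(U,-)$ for a general open $U$, which forces the reduction to an affine cover in the step $(2)\Rightarrow(3)$; everywhere else the argument is a mechanical unravelling of the definitions through the dictionary $\widetilde{(-)}$ supplied by Theorem \ref{B3}.
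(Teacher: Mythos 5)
Your proposal follows essentially the same route as the paper: the cycle $(1)\Rightarrow(2)\Rightarrow(3)\Rightarrow(4)\Rightarrow(1)$, with stability of $M$ and $N$ supplied by the Martsinkovsky--Strooker result, the module--sheaf dictionary of Theorem \ref{B3} carrying $\lambda$ to $\overline{\lambda}$, and Theorem \ref{B5} commuting $\overline{\lambda}$ with restriction for the step $(2)\Rightarrow(3)$. If anything, you are more careful than the paper on two points it glosses over --- citing Theorem \ref{B3} (rather than \ref{B5}) as the actual bridge in $(1)\Rightarrow(2)$, and flagging the passage from stable isomorphism to genuine isomorphism --- but the argument is the same.
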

\begin{proof}
$ "1\rightarrow 2".$ First note that, by \cite[page 593, proposition3]{MS}, $M$ and $N$ are stable. In view of \ref{B5}, it
is straight forward to see that
$ \overset{\sim}N\cong\overline{\lambda}\overset{\sim} M $ and 
$ \overset{\sim}M\cong\overline{\lambda}\overset{\sim} N.$

$ "2\rightarrow 3".$ The assumption and \ref{B5} imply
  $ \overset{\sim}N\mid_U\cong(\overline{\lambda}\overset{\sim} M)\mid_U\cong\overline{\lambda}(\overset{\sim} M\mid_U)$ 
  and
  $ \overset{\sim}M\mid_U\cong\overline{\lambda}(\overset{\sim} N\mid_U) .$ So, again using \ref{B5}, the result has desired.
 
\end{proof}
\begin{cor}\label{D1}
Let $X$ be an affine scheme and $\mathfrak{F}$ and $\mathfrak{G}$ be coherent sheaves. Then 
$\mathfrak{F} \sim \mathfrak{G}$
if and only if $\Gamma (X,\mathfrak{F})\sim\Gamma (X,\mathfrak{G} ).$ Moreover, $\mathfrak{F} \sim \mathfrak{G}$ if and only if $\overline{\lambda}\mathfrak{F} \cong  \mathfrak{G} $.
\end{cor}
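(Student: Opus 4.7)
The statement naturally splits into two biconditionals, and the plan is to reduce both to module-level assertions that have already been handled earlier in the paper.

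\emph{First equivalence.} Since $X = \Spec R$ is affine and $\mathfrak{F}, \mathfrak{G}$ are coherent, setting $M := \Gamma(X, \mathfrak{F})$ and $N := \Gamma(X, \mathfrak{G})$ we have $\mathfrak{F} \cong \overset{\sim}{M}$ and $\mathfrak{G} \cong \overset{\sim}{N}$. Under this dictionary, the asserted equivalence
$$\mathfrak{F} \sim \mathfrak{G} \iff \Gamma(X, \mathfrak{F}) \sim \Gamma(X, \mathfrak{G})$$
becomes $\overset{\sim}{M} \sim \overset{\sim}{N}$ if and only if $M \sim N$, which is precisely the equivalence $(1) \Leftrightarrow (2)$ (equivalently $(2) \Leftrightarrow (4)$, using that $\Gamma(X, \overset{\sim}{M}) = M$) of Theorem \ref{C1}. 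So this half requires no new work.

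\emph{Second equivalence (the ``Moreover'').} The forward direction is immediate from Definition \ref{B.2}, since by definition $\mathfrak{F} \sim \mathfrak{G}$ entails $\overline{\lambda}\mathfrak{F} \cong \mathfrak{G}$. For the converse, I would start from the assumption $\overline{\lambda}\mathfrak{F} \cong \mathfrak{G}$ and pass to global sections. Using Theorem \ref{B3}(3), which gives
$$\underline{\Gamma(X,\overline{\lambda}\overset{\sim}{M})} \cong \underline{\lambda M},$$
the hypothesis translates (up to stability, which is harmless by Lemma \ref{B}) to $\lambda M \cong N$ in the category of $R$-modules. One now invokes the classical Martsinkovsky--Strooker linkage theory \cite{MS}: under the paper's standing stability assumption, this relation yields $M \sim N$. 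Feeding $M \sim N$ back into the first equivalence already established gives $\mathfrak{F} \sim \mathfrak{G}$.

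\emph{Main obstacle.} The delicate point is the module-level implication ``$\lambda M \cong N \Rightarrow M \sim N$''. In [MS] horizontal linkage is governed by the identity $\lambda^2 M \cong M$ for stable $M$, so one must either cite this characterization explicitly or check that the hypotheses of the corollary (coherence plus the paper's standing assumption that $\mathfrak{F}$ admits a free resolution and is stable) are exactly what is needed to invoke it. The remainder of the argument is bookkeeping: keeping careful track of the underline (``stability'') notation when transporting an isomorphism between the sheaf side and the module side via Theorem \ref{B3} and Lemma \ref{B}, and making sure the conclusion $\overline{\lambda}\mathfrak{G} \cong \mathfrak{F}$ is recovered as an honest sheaf isomorphism rather than merely as a stability isomorphism.
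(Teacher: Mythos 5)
Your first equivalence is correct and is precisely the route the paper intends (the corollary carries no written proof and is evidently meant as an immediate consequence of Theorem \ref{C1}): writing $M=\Gamma(X,\mathfrak{F})$ and $N=\Gamma(X,\mathfrak{G})$, so that $\mathfrak{F}\cong\overset{\sim}{M}$ and $\mathfrak{G}\cong\overset{\sim}{N}$ by coherence over the affine $X=\Spec R$, the equivalence $(2)\Leftrightarrow(4)$ of that theorem is literally the assertion. The forward implication of the ``Moreover'' is likewise definitional, as you say.

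The converse of the ``Moreover'' is where a genuine gap sits, and you have correctly located it but not closed it. The module-level step you need, ``$\lambda M\cong N$ implies $M\sim N$,'' amounts to $\lambda^{2}M\cong M$, and in Martsinkovsky--Strooker this holds exactly when $M$ is stable \emph{and} $\Ext^{1}_{R}(\Tr M,R)=0$, i.e.\ $M$ is a first syzygy (torsionless). Stability alone is not enough: by the exact sequence (\ref{e3}), $\lambda(-)$ always lands inside a free module, so $\lambda^{2}M$ is torsionless, and hence $\lambda^{2}M\not\cong M$ whenever $M$ is not torsionless --- even though such an $M$ still satisfies $\overline{\lambda}\mathfrak{F}\cong\mathfrak{G}$ with $\mathfrak{G}:=\overline{\lambda}\mathfrak{F}$. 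Neither the corollary's stated hypotheses (coherence) nor the paper's standing conventions (finite free resolution, stability) supply the syzygy condition, so your plan cannot be completed as written; you must either add the hypothesis that $\mathfrak{F}$ is a syzygy and then cite \cite[page 600, Corollary 6]{MS} (compare Corollary \ref{D4}), or restrict the ``Moreover'' to that case. In fairness, the paper's own statement appears to carry the same unstated assumption, but a complete proof has to make it explicit. Your closing worry about promoting the stability isomorphisms of Theorem \ref{B3} to honest isomorphisms between the stable objects $\mathfrak{F}$ and $\overline{\lambda}\mathfrak{G}$ is also legitimate and requires a Krull--Schmidt-type cancellation argument on stalks, which you should spell out rather than defer.
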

Considering the ideal case, one may obtain another corollary of Theorem \ref{C1}, as follows.
\begin{cor}\label{A3}
Let $\fa$ and $ \fb$ be two ideals such that $\fa$ and $ \fb$ are linked by zero ideal. Then, by \cite[page 592 proposition 1]{MS} and \ref{C1}, $ \mathcal{O}_{\frac{R}{\fa}}\sim  \mathcal{O}_{\frac{R}{\fb}}.$
\end{cor}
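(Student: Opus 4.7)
The plan is to derive this corollary as a direct combination of the two results already invoked in its statement, namely Proposition~1 of \cite{MS} and Theorem~\ref{C1}. The proof has essentially no computational content; the task is to verify that the hypotheses of both results are in force and then string them together.

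First I would spell out the hypothesis. ``$\fa$ and $\fb$ are linked by the zero ideal'' means, in the sense of Peskine--Szpiro with the empty regular sequence, that $\fa = 0 :_R \fb$ and $\fb = 0 :_R \fa$. This is precisely the condition under which \cite[page 592, proposition~1]{MS} applies to produce module-theoretic linkage of the cyclic modules $R/\fa$ and $R/\fb$. Thus the first step is to invoke that proposition to conclude $R/\fa \sim R/\fb$ in the Martsinkovsky--Strooker sense, i.e.\ $R/\fa \cong \lambda(R/\fb)$ and $R/\fb \cong \lambda(R/\fa)$ up to free summands. In particular, $R/\fa$ and $R/\fb$ are stable $R$-modules, which is the hypothesis needed further down.

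Next I would pass to $X = \Spec R$ and apply Theorem~\ref{C1}, which establishes the equivalence $M \sim N \iff \widetilde{M} \sim \widetilde{N}$ of module linkage and sheaf linkage over an affine scheme. Applied to $M = R/\fa$ and $N = R/\fb$, this yields $\widetilde{R/\fa} \sim \widetilde{R/\fb}$. Identifying $\widetilde{R/\fa} = \mathcal{O}_{R/\fa}$ and $\widetilde{R/\fb} = \mathcal{O}_{R/\fb}$ as sheaves of $\mathcal{O}_X$-modules, we obtain $\mathcal{O}_{R/\fa} \sim \mathcal{O}_{R/\fb}$, as desired.

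The only real obstacle to address is bookkeeping: confirming that the stability assumption built into the definition of $\sim$ for sheaves (Definition~\ref{B.2}) is met, which follows from Lemma~\ref{B} together with the stability part of \cite[page 593, proposition~3]{MS} already used in the proof of Theorem~\ref{C1}. Once this is noted, no further argument is required.
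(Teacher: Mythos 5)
Your proposal is correct and follows exactly the route the paper intends: the corollary is proved by combining \cite[page 592, proposition 1]{MS} (to obtain the module-theoretic linkage $R/\fa \sim R/\fb$ from the ideal linkage by the zero ideal) with Theorem \ref{C1} (to transfer this to the associated sheaves on $\Spec R$). The additional remark on verifying stability via Lemma \ref{B} is a reasonable piece of bookkeeping that the paper leaves implicit.
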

It is well-known that linked modules are syzygy. To better study the linked sheaves, let's define a syzygy sheaf.
\begin{defn}
We say a sheaf of $\mathcal{O}_X$-module $\mathfrak{F}$ is a syzygy if it can be embeded in a free sheaf of finite rank.
\end{defn}
The next three items consider the question "whether properties of the ring affect the linkedness a module and the linkedness a sheaf of modules?". 
\begin{cor}\label{C5}
Let $\mathfrak{F}$ be a linked sheaf of modules. So, $\mathfrak{F}\cong \overline{\lambda}(\overline{\lambda}\mathfrak{F}).$ This implies that $\mathfrak{F}$ is a syzygy. In particular, for all open subset $U\subseteq X,$ $\mathfrak{F}(U)$ is a syzygy and so is torsionless.
\end{cor}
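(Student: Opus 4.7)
The plan is to read the conclusion straight off the exact sequence \eqref{e3} applied to $\overline{\lambda}\mathfrak{F}$, and then descend to sections.

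First, from Definition \ref{B.2}, being linked means there exists a sheaf $\mathfrak{G}$ with $\mathfrak{F}\cong\overline{\lambda}\mathfrak{G}$ and $\mathfrak{G}\cong\overline{\lambda}\mathfrak{F}$. Substituting the second into the first gives the identity $\mathfrak{F}\cong\overline{\lambda}(\overline{\lambda}\mathfrak{F})$, which is the first assertion.

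Next, I would apply \eqref{e3} of Definition \ref{B1} to the sheaf $\overline{\lambda}\mathfrak{F}$ (which has a free resolution of finite rank by Section~3, since $\mathfrak{F}$ does). That sequence reads
\[
0\rightarrow \overline{\lambda}(\overline{\lambda}\mathfrak{F})\rightarrow (\overset{l}\oplus\mathcal{O}_X)^{*}\rightarrow \overline{\Tr}(\overline{\lambda}\mathfrak{F})\rightarrow 0.
\]
Since $(\overset{l}\oplus\mathcal{O}_X)^{*}\cong \overset{l}\oplus\mathcal{O}_X$ is free of finite rank, and $\mathfrak{F}\cong \overline{\lambda}(\overline{\lambda}\mathfrak{F})$, we obtain an embedding $\mathfrak{F}\hookrightarrow \overset{l}\oplus\mathcal{O}_X$. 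By the definition just given, $\mathfrak{F}$ is a syzygy sheaf.

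For the last statement, let $U\subseteq X$ be open and apply the left-exact section functor $\Gamma(U,-)$ to the above embedding. This yields an injection
\[
\mathfrak{F}(U)\hookrightarrow \overset{l}\oplus \mathcal{O}(U),
\]
so $\mathfrak{F}(U)$ is a submodule of a finitely generated free $\mathcal{O}(U)$-module. A module embedded in a free module is torsionless (its natural map to the double dual is injective), which finishes the argument. The only mild subtlety is to make sure that \eqref{e3} is applicable to $\overline{\lambda}\mathfrak{F}$ in the first place, i.e.\ that it has a finite-rank free presentation; but this is precisely the content of Section~3, where $\overline{\lambda}$ is shown to preserve the class of sheaves with $(t_1,t_2)$-ranks, so no real obstacle arises.
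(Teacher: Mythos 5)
Your argument is correct and is exactly the intended one (the paper itself states this corollary without proof): the identity $\mathfrak{F}\cong\overline{\lambda}(\overline{\lambda}\mathfrak{F})$ is immediate from Definition \ref{B.2}, the embedding into a finite free sheaf is read off from the exact sequence \eqref{e3} applied to $\overline{\lambda}\mathfrak{F}$ (equivalently, to the linking partner $\mathfrak{G}\cong\overline{\lambda}\mathfrak{F}$, which by hypothesis has a finite free presentation, so the subtlety you flag dissolves), and left-exactness of $\Gamma(U,-)$ together with $\Gamma(U,\oplus^{l}\mathcal{O}_X)\cong\oplus^{l}\mathcal{O}(U)$ gives the injection of $\mathfrak{F}(U)$ into a finite free module, whence torsionlessness.
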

\begin{exam}\label{A.3}
Let $ R $ be a PID and $X=\Spec(R)$; Then there is no a linked coherent sheaf of $ \mathcal{O}_X $-module. Indeed, if $\mathfrak{F}$ is linked then, by \ref{C5}, $\mathfrak{F}(U)$ is a syzygy, for each $U\subseteq X$. This implies that $\mathfrak{F}(U)$ is a free module. Therefore, $\mathfrak{F}(U)$ can not be a linked module which is a contradiction with \ref{C1}. 
\end{exam}
\begin{exam}
Let $X=\Spec R$ be an integral affine and $\mathfrak{F}$ be a linked coherent sheaf. Assume that there exist a global section $s \in \Gamma(X, \mathfrak{F})$ and a element $f \in R$ such that $s\mid_{D(f)}=0.$ Then $D(f)=\emptyset.$
\end{exam}
\begin{proof}
In view of \ref{C5}, there exists an injective morphism
 $0\rightarrow\mathfrak{F}\overset{\phi}\rightarrow \overset{l}\oplus\mathcal{O}_X .$ Hence, we get the following commutative digram
$$\begin{CD}
         &&&&&&&&\\
         \ \ &&&& 0 @>>> \mathfrak{F}(X) @>{\phi _X}>>\overset{l}\oplus \mathcal{O}(X)&  \\
          &&&&&&  @VV{\rho_1}V   @VV{\rho_2}V \\
         \ \  &&&& 0 @>>>  \mathfrak{F}(D(f)) @>{\phi _f}>> \overset{l}\oplus \mathcal{O}(D(f)).
 &\\
        \end{CD}$$\\
   One may see that 
        $0= \phi_f\rho_1(s)=\rho_2\phi_X(s)=\phi_X(s)\mid_{D(f)}.$
  Lemma \cite[2.5.3]{H} implies that $f^n\phi_X(s)=0$ for some integer numbers $n.$ Assume that $\phi_X(s)=\sum^l_{i=1}r_ie_i.$ Now, it is straight forward to see that
      there exists $r_j\neq 0$ with $f^nr_j=0.$ This implies that $f=0$ and the result has desired.
\end{proof}
 The next theorem shows that the linkedness of a sheaf is a locally property. 
\begin{thm}\label{C2.3}
Then the following statments hold.
\begin{enumerate}
\item[ (1) ] 
$\mathfrak{F}$ is linked if and only if $\mathfrak{F}\mid_U$ is linked as $\mathcal{O}_X\mid_U$-modules, for every open subset $U\subseteq X.$
\item[ (2) ]
 In addition, in the case where $\mathfrak{F}$ is a coherent sheaf, $\mathfrak{F}$ is linked if and only if $\mathfrak{F}\mid_U$ is linked,
 for all open affine subset $U\subseteq X.$
\end{enumerate}
\end{thm}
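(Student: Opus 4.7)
The main tool is Theorem \ref{B5}, which states $(\overline{\lambda}\mathfrak{F})\mid_U \cong \overline{\lambda}(\mathfrak{F}\mid_U)$ under stabilization, together with Corollary \ref{R.2} which recovers global isomorphisms from affine-local ones for coherent sheaves. The definition of linkage in \ref{B.2} is symmetric in $\overline{\lambda}$, so the whole argument reduces to propagating the identity $(\overline{\lambda})^2\mathfrak{F}\cong\mathfrak{F}$ between $X$ and its open subsets.

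For part (1), the backward implication is immediate by specializing to $U=X$. For the forward direction, I would start from a linkage $\mathfrak{F}\cong\overline{\lambda}\mathfrak{G}$, $\mathfrak{G}\cong\overline{\lambda}\mathfrak{F}$ on $X$, restrict both isomorphisms to an open subset $U$, and apply Theorem \ref{B5} to rewrite the right-hand sides as
\[
\mathfrak{F}\mid_U \cong (\overline{\lambda}\mathfrak{G})\mid_U \cong \overline{\lambda}(\mathfrak{G}\mid_U), \qquad \mathfrak{G}\mid_U \cong (\overline{\lambda}\mathfrak{F})\mid_U \cong \overline{\lambda}(\mathfrak{F}\mid_U).
\]
This exhibits $\mathfrak{F}\mid_U$ as linked to $\mathfrak{G}\mid_U$ over $\mathcal{O}_X\mid_U$.

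For part (2), the forward direction is a special case of part (1) restricted to affine opens. For the backward direction, set $\mathfrak{G} := \overline{\lambda}\mathfrak{F}$; the task reduces to establishing $(\overline{\lambda})^2\mathfrak{F}\cong\mathfrak{F}$. Since $\mathfrak{F}$ is coherent, so is $(\overline{\lambda})^2\mathfrak{F}$ by Remark \ref{B21}(2). By Corollary \ref{R.2} it suffices to verify the isomorphism on an arbitrary affine open $U\subseteq X$. Two applications of Theorem \ref{B5} yield
\[
\bigl((\overline{\lambda})^2\mathfrak{F}\bigr)\mid_U \;\cong\; \overline{\lambda}\bigl((\overline{\lambda}\mathfrak{F})\mid_U\bigr) \;\cong\; (\overline{\lambda})^2(\mathfrak{F}\mid_U),
\]
and the hypothesis that $\mathfrak{F}\mid_U$ is linked forces this last sheaf to be isomorphic to $\mathfrak{F}\mid_U$, completing the gluing step.

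The main obstacle is the usual stability bookkeeping: the isomorphisms in Theorem \ref{B5} are only isomorphisms of stabilization classes, so one must verify that passing to restrictions and to $\overline{\lambda}$ preserves stability and that the gluing in \ref{R.2} is compatible with the stabilization relation $\underline{\;\cdot\;}$. This is handled by invoking the standing assumption that the sheaves under consideration are stable (so that $\underline{\mathfrak{F}}\cong\underline{\mathfrak{G}}$ upgrades to $\mathfrak{F}\cong\mathfrak{G}$), exactly as was done implicitly in Theorem \ref{C1} and Proposition \ref{B6}; apart from this, both directions are formal consequences of Theorem \ref{B5} and Corollary \ref{R.2}.
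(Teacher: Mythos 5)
Your proof is correct and follows essentially the same route as the paper: both directions of (1) come from restricting the linkage isomorphisms and commuting $\overline{\lambda}$ with restriction via Theorem \ref{B5}, and the backward direction of (2) sets $\mathfrak{G}=\overline{\lambda}\mathfrak{F}$, verifies $(\overline{\lambda})^2\mathfrak{F}\cong\mathfrak{F}$ affine-locally, and glues with Corollary \ref{R.2}. Your explicit remark about the stabilization bookkeeping is a point the paper's own proof passes over silently, but it does not change the argument.
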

\begin{proof}
$ (1) $ Let $\mathfrak{F}$ be linked and $U\subseteq X$ be an open subset. Theorem \ref{B5} implies the isomorphisms
$ \mathfrak{F}\mid_U= (\overline{\lambda}^2  \mathfrak{F})\mid_U= \overline{\lambda}((\overline{\lambda}  \mathfrak{F})\mid_U)= \overline{\lambda} (\overline{\lambda} (\mathfrak{F}\mid_U))$ and the result has desired.
The converse follows using similar argument as used above.
  
  $ (2) $ Assume that, for all open affine subset $U\subseteq X,$ $\mathfrak{F}\mid_U$ is a linked sheaf of  $\mathcal{O}_X\mid_U$-module. This implies $ \mathfrak{F}\mid_{U_i}= (\overline{\lambda} (\overline{\lambda}\mathfrak{F}))\mid_{U_i}.$ Now, the result follows from \ref{R.2}.
  
 \end{proof}
The next corollary states an equal condition for linkedness of a sheaf of modules on a scheme. \begin{cor}\label{D4}
Let $\mathfrak{F}$ be a coherent sheaf of $\mathcal{O}_X$-module. Then $\mathfrak{F}$ is linked if and only if $\mathfrak{F}$ is stable and is a syzygy.
 \end{cor}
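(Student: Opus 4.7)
The plan is to prove both implications by reducing the question to the corresponding statement for modules, using the local-global principle established in Theorem~\ref{C2.3} together with the affine translation in Theorem~\ref{C1}.

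For the forward direction, assume $\mathfrak{F}$ is linked. Stability is already built into Definition~\ref{B.2} (it is part of the standing assumption on $\mathfrak{F}$ and $\mathfrak{G}$ introduced in the linkage setup), so there is nothing to prove there. The fact that $\mathfrak{F}$ is a syzygy is then an immediate consequence of Corollary~\ref{C5}, which gives the embedding $0\to \mathfrak{F}\hookrightarrow \overset{l}\oplus \mathcal{O}_X$ coming from the exact sequence defining $\overline{\lambda}(\overline{\lambda}\mathfrak{F})$.

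For the reverse direction, suppose $\mathfrak{F}$ is stable and a syzygy. By Theorem~\ref{C2.3}(2), it suffices to show that $\mathfrak{F}\mid_U$ is linked for every open affine subset $U=\Spec R_U \subseteq X$. Since $\mathfrak{F}$ is coherent, we can write $\mathfrak{F}\mid_U = \overset{\sim}{M}$ for a finitely generated $R_U$-module $M = \Gamma(U,\mathfrak{F}\mid_U)$. The stability of $\mathfrak{F}$ descends to $\mathfrak{F}\mid_U$, and hence, by Lemma~\ref{B}, to $M$. The syzygy property of $\mathfrak{F}$ gives an embedding $\mathfrak{F}\hookrightarrow \overset{l}\oplus\mathcal{O}_X$; restricting to $U$ and taking global sections yields an embedding of $M$ into a free $R_U$-module of finite rank, so $M$ is a syzygy module.

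At this point I invoke the classical Martsinkovsky--Strooker criterion \cite{MS}: a finitely generated module over a Noetherian ring is linked if and only if it is stable and a syzygy. Applying this to $M$, we obtain that $M$ is linked, and then Theorem~\ref{C1} (equivalence of linkedness for $M$ and for $\overset{\sim}{M}$) gives that $\mathfrak{F}\mid_U=\overset{\sim}{M}$ is linked as an $\mathcal{O}_X\mid_U$-module. Since $U$ was an arbitrary open affine, Theorem~\ref{C2.3}(2) lets us glue this into the conclusion that $\mathfrak{F}$ itself is linked. The one delicate point to verify carefully is that the module-theoretic syzygy property really does follow from the sheaf-theoretic one on each affine patch — this uses exactness of $\Gamma(U,-)$ on affine opens for quasi-coherent sheaves, so no cohomological obstruction arises. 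The rest of the proof is a bookkeeping exercise in passing between $M$, $\overset{\sim}{M}$, and $\mathfrak{F}\mid_U$.
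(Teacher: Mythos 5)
Your proof is correct and follows essentially the same route as the paper: reduce to the affine case via Theorem \ref{C2.3}, translate to a finitely generated module via Theorem \ref{C1}/Corollary \ref{D1}, and invoke the Martsinkovsky--Strooker criterion \cite[p.~600, Cor.~6]{MS}, with the forward direction handled by Corollary \ref{C5}. The one step you assert without argument --- that stability of $\mathfrak{F}$ descends to $\mathfrak{F}\mid_U$ --- is also left implicit in the paper's proof and deserves a line, but it does not change the fact that the two arguments are the same.
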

\begin{proof}
By the previous theorem, it is enough to consider the case where $X=\Spec R$. So, there exists a finitely generated $R$-module with $\mathfrak{F}=\overset {\sim} M.$ Now, using \ref{D1} and \cite[page 600, corollary 6]{MS}, the result has desired . 
 \end{proof}  
 Is a sheaf of modules obtained by glueing of linked sheaves a linked sheaf? To answer this question, it is necessary to state the following concept.
 \begin{defn}
 Let $\mathfrak{F}$ and $\mathfrak{G}$ have the same $(t_1 , t_2)$-ranks. Then we say $\mathfrak{F}$ and $\mathfrak{G}$ are $(t_1 , t_2)$ co-rank.
  \end{defn}
 \begin{thm}\label{C4}
 Let $\{U_i\}$ be a family of open affine subsets $U_i\subseteq X$ and $t$ and $k$ be integer numbers. Assume that 
 $\{(U_i, \mathfrak{F}_i)\}$ is a family of $(t , k)$ co-rank linked coherent sheaves of modules which is true in Glueing lemma. Then there exists a unique linked sheaf $\mathfrak{F}$ on $X$
  such that $ \mathfrak{F}\mid_{U_i}\cong\mathfrak{F}_i ,$ for any $i.$
\end{thm}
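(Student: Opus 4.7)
The plan is to construct $\mathfrak{F}$ via the glueing lemma for sheaves of modules and then verify linkedness locally using Theorem \ref{C2.3}(2) together with the compatibility of $\overline{\lambda}$ with restriction established in Theorem \ref{B5}.

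First, since the family $\{(U_i,\mathfrak{F}_i)\}$ satisfies the hypotheses of the glueing lemma, there is a unique sheaf of $\mathcal{O}_X$-modules $\mathfrak{F}$ on $X=\bigcup U_i$ together with isomorphisms $\mathfrak{F}\mid_{U_i}\cong \mathfrak{F}_i$ that agree on overlaps. Coherence is a local property, so $\mathfrak{F}$ is coherent because every $\mathfrak{F}_i$ is. By Theorem \ref{C2.3}(2), to prove that $\mathfrak{F}$ is linked it suffices to show that $\mathfrak{F}\mid_U$ is linked for every open affine $U\subseteq X$; in particular, it is enough to check this on each $U_i$ and then to extend.

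For every $i$, Remark \ref{r3} together with the $(t,k)$ co-rank hypothesis guarantees that $\mathfrak{F}\mid_{U_i}$ and $\mathfrak{F}_i$ have the same $(t,k)$-ranks, so Theorem \ref{B5} gives stability isomorphisms
\[
\underline{(\overline{\lambda}\,\mathfrak{F})\mid_{U_i}}\cong \underline{\overline{\lambda}(\mathfrak{F}\mid_{U_i})}\cong \underline{\overline{\lambda}\,\mathfrak{F}_i},
\]
and iterating once more,
\[
\underline{(\overline{\lambda}^2\mathfrak{F})\mid_{U_i}}\cong \underline{\overline{\lambda}^2(\mathfrak{F}\mid_{U_i})}\cong \underline{\overline{\lambda}^2\mathfrak{F}_i}\cong \underline{\mathfrak{F}_i}\cong \underline{\mathfrak{F}\mid_{U_i}},
\]
the penultimate isomorphism coming from the fact that each $\mathfrak{F}_i$ is linked. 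Hence $\mathfrak{F}\mid_{U_i}$ is linked as an $\mathcal{O}_X\mid_{U_i}$-module.

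To pass from the cover $\{U_i\}$ to every open affine $U\subseteq X$, I would argue that for such a $U$ the restriction $\mathfrak{F}\mid_U$ is covered by the opens $\{U\cap U_i\}$, on each of which $\overline{\lambda}^2(\mathfrak{F}\mid_U)\cong \mathfrak{F}\mid_U$ by the previous paragraph and Theorem \ref{B5}; these local isomorphisms glue by Corollary \ref{R.2} (using coherence). Therefore Theorem \ref{C2.3}(2) delivers $(\overline{\lambda})^2\mathfrak{F}\cong \mathfrak{F}$, i.e.\ $\mathfrak{F}$ is linked.

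The main obstacle I anticipate is the compatibility of the local isomorphisms $(\overline{\lambda}^2\mathfrak{F})\mid_{U_i}\cong \mathfrak{F}\mid_{U_i}$ on the pairwise overlaps $U_i\cap U_j$, so that Corollary \ref{R.2} (or the uniqueness clause in the glueing lemma) can be applied. This is precisely where the uniform $(t,k)$ co-rank hypothesis is used: it forces all of the local free presentations to have the same ranks, so that the transposes, the syzygies, and therefore the iterated $\overline{\lambda}$ transform fit together on intersections in a canonical way, and the stability ambiguity in Theorem \ref{B5} collapses to an honest isomorphism of $\mathfrak{F}\mid_{U_i\cap U_j}$.
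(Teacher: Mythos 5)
Your proposal is correct and follows essentially the same route as the paper: glue the $\mathfrak{F}_i$ to obtain $\mathfrak{F}$, use the $(t,k)$ co-rank hypothesis and Theorem \ref{B5} to get $(\overline{\lambda}^2\mathfrak{F})\mid_{U_i}\cong \overline{\lambda}^2(\mathfrak{F}\mid_{U_i})\cong\mathfrak{F}_i\cong\mathfrak{F}\mid_{U_i}$, and conclude by Corollary \ref{R.2}. Your extra care about passing to arbitrary open affines and about the stability-isomorphism ambiguity on overlaps is a refinement of, not a departure from, the paper's argument.
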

 \begin{proof}
 By Glueing lemma, there exists a unique sheaf $\mathfrak{F}$ on $X$
  such that $ \mathfrak{F}\mid_{U_i}\cong\mathfrak{F}_i ,$ for any $i.$ Also, due to $ \mathfrak{F}\mid_{U_i}=\overset{\sim}{\Gamma(U,\mathfrak{F}_i)},$ $\mathfrak{F}$ is a coherent sheaf and has the same $(t , k)$-rank. On the other hand, via the definition and \ref{B5},
  $\mathfrak{F}\mid_{U_i}\cong\mathfrak{F}_i\cong \overline{\lambda}(\overline{\lambda}\mathfrak{F}_i)\cong\overline{\lambda}(\overline{\lambda}(\mathfrak{F}\mid_{U_i}))\cong(\overline{\lambda}(\overline{\lambda}\mathfrak{F}))\mid_{U_i}.$
 Again, the result follows from \ref{R.2}.
 \end{proof}
 \begin{thm}\label{C}
\textbf{Glueing of linked sheaves.}
Let $(X, \mathfrak{F})$ be a sheaf obtained by glueing the family of sheaves $(X_i, \mathfrak{F}_i)$ under glueing sheaves of modules and glueing schemes conditions. Assume that, for all $i,$ the scheme $X_i$ is connected. Then $\mathfrak{F}$ is linked if there exist integer numbers $t$ and $k$ such that $\{X_i, \mathfrak{F}_i\}$ are $(t , k)$ co-rank linked sheaves of modules.
 \end{thm}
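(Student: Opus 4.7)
The plan is to reduce Theorem~\ref{C} to the affine-case glueing result already established as Theorem~\ref{C4}, by refining the given cover $\{X_i\}$ of $X$ to an open affine cover. By the glueing schemes condition, the $X_i$'s sit inside $X$ as open subschemes, and by the glueing sheaves condition, $\mathfrak{F}$ satisfies $\mathfrak{F}\mid_{X_i}\cong\mathfrak{F}_i$. For each $i$, I would cover $X_i$ by open affines $\{U_{i\alpha}\}_{\alpha}$; the collection $\{U_{i\alpha}\}_{i,\alpha}$ is then an affine open cover of $X$.

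The second step is to verify that each pair $(U_{i\alpha},\mathfrak{F}\mid_{U_{i\alpha}})$ meets the hypotheses of Theorem~\ref{C4}. Since $\mathfrak{F}\mid_{U_{i\alpha}}=\mathfrak{F}_i\mid_{U_{i\alpha}}$ and restriction preserves coherence, $\mathfrak{F}\mid_{U_{i\alpha}}$ is coherent; linkedness of $\mathfrak{F}_i\mid_{U_{i\alpha}}$ follows from Theorem~\ref{C2.3}(1) applied to the linked sheaf $\mathfrak{F}_i$ on $X_i$; and the $(t,k)$ co-rank property is preserved under restriction by Remark~\ref{r3}, since each $X_i$ is connected. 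The compatibility required by the glueing lemma for the family $\{(U_{i\alpha},\mathfrak{F}\mid_{U_{i\alpha}})\}$ is automatic, since these pieces come from a single globally defined sheaf $\mathfrak{F}$ on $X$.

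Applying Theorem~\ref{C4} to this refined family then produces a unique linked sheaf $\mathfrak{F}'$ on $X$ with $\mathfrak{F}'\mid_{U_{i\alpha}}\cong\mathfrak{F}\mid_{U_{i\alpha}}$ for every $(i,\alpha)$. Since $\mathfrak{F}$ itself has this property, the uniqueness clause in Theorem~\ref{C4} forces $\mathfrak{F}'\cong\mathfrak{F}$, and thus $\mathfrak{F}$ is linked.

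The step that I expect to be the main obstacle is the passage from local to global, namely making sure that the uniqueness in Theorem~\ref{C4} actually identifies $\mathfrak{F}'$ with $\mathfrak{F}$ rather than merely with a sheaf that looks locally like $\mathfrak{F}$. If that identification is delicate, an alternative route is to argue directly: by Theorem~\ref{B5} one has $((\overline{\lambda})^{2}\mathfrak{F})\mid_{U_{i\alpha}}\cong(\overline{\lambda})^{2}(\mathfrak{F}\mid_{U_{i\alpha}})\cong\mathfrak{F}\mid_{U_{i\alpha}}$, where the last isomorphism uses linkedness of $\mathfrak{F}_i\mid_{U_{i\alpha}}$; one can then invoke Corollary~\ref{R.2} to promote these affine-local isomorphisms to a global isomorphism $(\overline{\lambda})^{2}\mathfrak{F}\cong\mathfrak{F}$, which by Definition~\ref{B.2} is precisely the assertion that $\mathfrak{F}$ is linked.
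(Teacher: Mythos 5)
Your proposal is correct and follows the same basic strategy as the paper: reduce to the affine glueing result, Theorem~\ref{C4}. The difference is one of care rather than of route. The paper's proof devotes essentially all of its effort to checking that the glued scheme $X$ is connected (so that the standing hypotheses and Remark~\ref{r3} apply) and then simply says ``the result follows from Theorem~\ref{C4}'' --- even though the $X_i$ in Theorem~\ref{C} are arbitrary connected schemes, not open \emph{affine} subsets of $X$, so Theorem~\ref{C4} does not literally apply to the cover $\{X_i\}$. Your refinement to an affine cover $\{U_{i\alpha}\}$, together with the verification via Theorem~\ref{C2.3}(1) and Remark~\ref{r3} that linkedness and the $(t,k)$ co-rank condition descend to the $U_{i\alpha}$, fills exactly this gap, and your fallback argument (apply Theorem~\ref{B5} on each $U_{i\alpha}$ and globalize with Corollary~\ref{R.2}) is precisely the computation inside the paper's proof of Theorem~\ref{C4}, so the worry about the uniqueness clause is easily sidestepped. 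The one point you omit that the paper does address is the connectedness of $X$: this is a standing hypothesis of the whole paper and is what makes Remark~\ref{r3} (constancy of ranks) available, so you should add the observation that $X$, being obtained by glueing connected schemes along (nonempty) open subsets, is itself connected. With that sentence added, your argument is complete and in fact tighter than the published one.
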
 
  \begin{proof}
  First note that, in
conjunction with the structure of the scheme made of glueing schemes, implies $X$ is connected. For instance, consider glueing of $X_1$ and $X_2.$ Using a special equation relation, $X: = \frac{X_1\cup X_2}{\thicksim}$ and the intersection of connected schemes is not empty. This emplies that $X$ is connected. Therefore, the result follows from theorem \ref{C4}.
  \end{proof}
The next item consider the question``whether there exists a linked subsheaf of $\mathfrak{F}$ when $\mathfrak{F}$ is an arbitrary sheaf".
\begin{lem}\label{L1}
Let $U\subseteq X$ be an open subset such that $\mathcal{O}(U)$ is not an integral domain.Then $\mathfrak{F}\mid_U$ has a linked subsheaf if and only if 
$\Ass\mathfrak{F}(U)\cap\Ass\mathcal{O}(U)\neq\varnothing.$
\end{lem}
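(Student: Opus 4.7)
The strategy is to reduce to the affine case and translate the statement into module-theoretic language. Assuming $U = \Spec R$ with $R = \mathcal{O}(U)$ (the setting in which the lemma is actually applied in Theorem \ref{p.}), set $M := \mathfrak{F}(U)$ so that $\mathfrak{F}\mid_U \cong \widetilde{M}$. Under the quasi-coherent/module equivalence, nonzero coherent subsheaves of $\widetilde{M}$ correspond bijectively to nonzero $R$-submodules $N \subseteq M$, and by Corollary \ref{D1} such a subsheaf $\widetilde{N}$ is linked if and only if $N$ is a linked $R$-module. Invoking Corollary \ref{D4} (whose proof passes through \cite[Corollary 6, p.~600]{MS}), $N$ is linked exactly when it is stable and a syzygy, i.e., embeds into some free module $R^t$. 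The lemma therefore reduces to showing that $M$ admits a nonzero stable syzygy submodule if and only if $\Ass M \cap \Ass R \neq \varnothing$.

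For $(\Rightarrow)$, if $0 \neq N \subseteq M$ is a stable syzygy submodule, the embedding $N \hookrightarrow R^t$ gives $\Ass N \subseteq \Ass R^t = \Ass R$, while $N \subseteq M$ gives $\Ass N \subseteq \Ass M$; since $N \neq 0$, $\Ass N$ is nonempty and provides a prime in the intersection. For $(\Leftarrow)$, fix $\mathfrak{p} \in \Ass M \cap \Ass R$ and choose $m \in M$, $r \in R$ with $\Ann m = \Ann r = \mathfrak{p}$. The cyclic submodule $Rm \cong R/\mathfrak{p} \cong Rr$ embeds into $R$, hence is a syzygy. Because $R$ is not a domain, the zero ideal is not prime and so $0 \notin \Ass R$, forcing $\mathfrak{p} \neq 0$; consequently every element of $R/\mathfrak{p}$ is annihilated by the nonzero ideal $\mathfrak{p}$, so $R/\mathfrak{p}$ admits no nonzero free summand, i.e., is stable. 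By Corollary \ref{D4}, $Rm$ is a linked $R$-module, and $\widetilde{Rm} \subseteq \mathfrak{F}\mid_U$ is the required linked subsheaf.

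The main obstacle is the reduction step itself: carefully justifying that ``linked subsheaf of $\mathfrak{F}\mid_U$'' translates to ``linked $R$-submodule of $M$.'' This uses coherence of $\mathfrak{F}$ (so that any nonzero subsheaf is of the form $\widetilde{N}$) and the sheaf-module linkage correspondence of Theorem \ref{C1} and Corollary \ref{D1}. Once this bridge is in place, both implications collapse to standard manipulations with associated primes, with the subtle check being the absence of a free summand in $R/\mathfrak{p}$ when $\mathfrak{p} \neq 0$, which the non-domain hypothesis on $\mathcal{O}(U)$ guarantees.
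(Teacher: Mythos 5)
Your proof is correct and follows essentially the same route as the paper's: the forward direction is the identical associated-primes argument ($\Ass$ of a linked, hence syzygy, subsheaf lands in $\Ass\mathcal{O}(U)\cap\Ass\mathfrak{F}(U)$), and for the converse you simply unpack the paper's citation to \cite[2.5]{JS2} by constructing $R/\mathfrak{p}\cong Rm$ directly and verifying it is a stable syzygy, using the non-domain hypothesis to force $\mathfrak{p}\neq 0$. Both arguments rely on the same implicit conventions (the linked subsheaf is nonzero and, having a finite free resolution, is automatically quasi-coherent), so this is the paper's proof with the external reference made self-contained.
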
 
\begin{proof}
Assume that there is a point $ \fp \in \Ass\mathfrak{F}(U)\cap\Ass\mathcal{O}(U).$ In view of \cite[2.5]{JS2} and \cite[page 592 proposition 1]{MS}, $\frac{\mathcal{O}(U)}{\fp}$ is a linked submodule of $\mathfrak{F}(U).$ So, \ref{C1} implies that $(\frac{\mathcal{O}(U)}{\fp})^{\sim}$ is a linked subsheaf of $\mathfrak{F}\mid_U.$

Conversely, assume that $\mathfrak{F}\mid_U$ has a linked subsheaf $\mathfrak{F'}.$ Via the definition,  $\Ass\mathfrak{F'}(U)\subseteq\Ass\mathcal{O}(U).$ Now, the result has desired from the fact that $\varnothing\neq\Ass\mathfrak{F'}(U)\subseteq\Ass\mathfrak{F}(U).$
\end{proof}
\begin{cor}
Let $\mathcal{O}\mid_U$ is not an integral domain and $\Ass\mathfrak{F}(U)\cap\Ass\mathcal{O}(U) =\varnothing,$ for all open subset $U\subseteq X.$ Then $\mathfrak{F}$ and $\mathfrak{F}\mid_U$ are not linked and they have not a linked subsheaf.
\end{cor}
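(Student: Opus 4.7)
The plan is to derive the corollary by a direct combination of Lemma \ref{L1} with the locality of linkedness, Theorem \ref{C2.3}. The statement has three separate conclusions (non-linkage of $\mathfrak{F}$, non-linkage of each $\mathfrak{F}\mid_U$, and non-existence of a linked subsheaf in either), and I would handle the local statement first and bootstrap to the global one.

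First I would fix an arbitrary open subset $U \subseteq X$. By hypothesis $\mathcal{O}(U)$ is not an integral domain and $\Ass\mathfrak{F}(U)\cap\Ass\mathcal{O}(U)=\varnothing$, so Lemma \ref{L1} applies in its contrapositive form: $\mathfrak{F}\mid_U$ cannot contain any linked subsheaf. In particular, $\mathfrak{F}\mid_U$ itself is not a linked sheaf of $\mathcal{O}_X\mid_U$-modules, since a linked sheaf is tautologically a linked subsheaf of itself.

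Next, I would pass to the global sheaf by contradiction. Suppose $\mathfrak{F}$ were linked on $X$. By Theorem \ref{C2.3}(1), $\mathfrak{F}\mid_U$ would be linked for every open $U \subseteq X$, contradicting the previous paragraph. Likewise, if $\mathfrak{F}$ admitted a linked subsheaf $\mathfrak{F}'$, then Theorem \ref{B5} together with Theorem \ref{C2.3}(1) would force $\mathfrak{F}'\mid_U$ to be a linked subsheaf of $\mathfrak{F}\mid_U$ for each $U$, again contradicting the conclusion obtained from Lemma \ref{L1}. This exhausts all four assertions.

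The argument is essentially a bookkeeping exercise built on the two previous theorems, so I do not anticipate any serious obstacle. The only point requiring a bit of care is the compatibility of the phrase ``linked subsheaf'': a linked subsheaf of $\mathfrak{F}$ on $X$ restricts to a linked subsheaf of $\mathfrak{F}\mid_U$ on $U$ exactly because Theorem \ref{C2.3}(1) gives $(\overline{\lambda}^{2}\mathfrak{F}')\mid_U \cong \overline{\lambda}^{2}(\mathfrak{F}'\mid_U)$ via Theorem \ref{B5}, so the restricted sheaf inherits the linkage relation. Once this compatibility is recorded, the contradiction in each case is immediate.
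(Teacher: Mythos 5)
Your argument is correct and is exactly the intended one: the paper states this corollary without proof as an immediate consequence of Lemma \ref{L1} (in contrapositive form) together with the locality of linkedness from Theorem \ref{C2.3}. Your added remark that a linked subsheaf of $\mathfrak{F}$ restricts to a linked subsheaf of $\mathfrak{F}\mid_U$ is the only point needing care, and you handle it correctly via Theorem \ref{B5}.
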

In the rest of this section we study whether exists a maximal linked subsheaf.

\begin{thm}\label{p.}
Let $X$ be a Noetherian scheme and $\mathfrak{F}$ be a coherent sheaf. Suppose that there exists an open affine subset $U\subseteq X$ such that $\mathcal{O}\mid_U$ is not an integral domain and $\Ass\mathfrak{F}(U)\cap\Ass\mathcal{O}(U) \neq\varnothing.$ Then the set 
$$\sum = \{ \mathfrak{F'}\mid \mathfrak{F'} \text{ is a linked subsheaf of } \mathfrak{F}\mid_ U \}$$
is not empty and has maximal element. Moreover, for any two maximal members $\mathfrak{F'}$ and $\mathfrak{F''},$ $\mathfrak{F'}\oplus \mathfrak{F''}$ is not stable.
\end{thm}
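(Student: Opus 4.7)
The plan is to split the argument into three stages and to move freely between subsheaves of $\mathfrak{F}\mid_U$ and finitely generated submodules of $M := \mathfrak{F}(U)$ over the Noetherian ring $R := \mathcal{O}(U)$, using Theorem \ref{C1} to transport the linkage property. Note that since $X$ is Noetherian and $U$ is affine, $R$ is Noetherian, and coherence of $\mathfrak{F}$ makes $M$ a finitely generated (hence Noetherian) $R$-module.

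For non-emptiness of $\Sigma$, Lemma \ref{L1} applies verbatim: the two standing hypotheses on $\mathcal{O}\mid_U$ and on $\Ass\mathfrak{F}(U)\cap\Ass\mathcal{O}(U)$ are exactly what the lemma requires, and it produces a linked subsheaf of $\mathfrak{F}\mid_U$, which belongs to $\Sigma$.

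For the existence of a maximal element, I would apply Zorn's lemma to $(\Sigma,\subseteq)$. A chain in $\Sigma$ corresponds via $\Gamma(U,-)$ to an ascending chain of submodules of the Noetherian module $M$, which must stabilize; the terminal term then remains in $\Sigma$ and serves as an upper bound for the chain. Zorn's lemma supplies a maximal element.

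For the ``moreover" clause, let $\mathfrak{F}'$ and $\mathfrak{F}''$ be two maximal members of $\Sigma$ and suppose for contradiction that $\mathfrak{F}'\oplus\mathfrak{F}''$ is stable. By Corollary \ref{D4}, each summand is a stable syzygy, and the direct sum of syzygies (obtained by concatenating embeddings into finite-rank free sheaves) is itself a syzygy. So $\mathfrak{F}'\oplus\mathfrak{F}''$ is a stable syzygy, hence linked by Corollary \ref{D4}. Descending to the module level via Theorem \ref{C1}, the submodules $M' := \Gamma(U,\mathfrak{F}')$ and $M'' := \Gamma(U,\mathfrak{F}'')$ are maximal linked submodules of $M$ with $M'\oplus M''$ linked. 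The internal sum $M' + M'' \subseteq M$ then contains both $M'$ and $M''$, and one invokes the module-theoretic maximal-linked-submodule result of \cite{MS} to conclude that $M'+M''$ is itself linked, contradicting the maximality of $M'$ (and of $M''$). The main obstacle I expect is precisely this final implication, namely that linkedness passes from the external direct sum $M'\oplus M''$ to the internal sum $M'+M''$ inside $M$. A homomorphic image of a linked module is not linked in general, so one cannot conclude directly; the step works by invoking the characterization of linked modules as stable syzygies and checking that $M'+M''$, sitting inside the stable syzygy $M$, inherits both properties. This is where the reduction to the affine/module setting via Theorem \ref{C1} pays off, since the corresponding submodule assertion is already available in \cite{MS}.
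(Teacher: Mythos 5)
Your treatment of non-emptiness and of the existence of a maximal element coincides with the paper's: Lemma \ref{L1} supplies a linked subsheaf, and the correspondence between coherent subsheaves of $\mathfrak{F}\mid_U$ and submodules of the Noetherian module $M=\mathfrak{F}(U)$ (via Theorem \ref{C1} and \cite[2.5.5]{H}) reduces maximality to the maximal condition on submodules of $M$; whether you phrase this through Zorn's lemma on chains or directly through the Noetherian maximal condition is immaterial.

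The problem is in the ``moreover'' clause. Up to the point where you conclude that $\mathfrak{F'}\oplus\mathfrak{F''}$, if stable, is a syzygy and hence linked by Corollary \ref{D4}, you are following the paper exactly; the paper stops there and reads the linkedness of $\mathfrak{F'}\oplus\mathfrak{F''}$ as the contradiction with maximality. Your additional detour through the internal sum $M'+M''$ is not justified as written. There is no result in \cite{MS} asserting that the sum of two linked submodules of $M$ is linked, and the repair you sketch --- that $M'+M''$ inherits ``stable syzygy'' from $M$ --- fails at both ends: $M=\mathfrak{F}(U)$ is not assumed to be linked, stable, or a syzygy (the hypotheses say nothing about $\mathfrak{F}$ beyond coherence and the condition on associated primes), and stability does not pass to submodules in any case (a stable module can contain a free cyclic submodule). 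You correctly observe that a homomorphic image of a linked module need not be linked, but the proposed workaround does not close that hole, so the contradiction with maximality is never actually derived. To complete an argument along your lines you would need to show that the sum $\mathfrak{F'}+\mathfrak{F''}$ taken inside $\mathfrak{F}\mid_U$ is itself an element of $\sum$ properly containing $\mathfrak{F'}$; alternatively, follow the paper and obtain the contradiction directly from the linkedness of $\mathfrak{F'}\oplus\mathfrak{F''}$, which implicitly treats that direct sum as a subsheaf of $\mathfrak{F}\mid_U$ dominating both maximal members.
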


\begin{proof}
The first part follows from \ref{L1}. Also, via the assumption, there exists a finitely generated $\mathcal{O}(U)$-module with $\mathfrak{F}\mid_U \cong \overset{\sim}M.$ So, in conjuction with \cite[2.5.5]{H} and \ref{C1}, there is an One-to-one correspondence between $\sum$ and 
$$\sum^{'}_U = \{N\leqslant M\mid N  \text{ is a linked submodule of } M \}.$$
 Hence, the existence of the maximal member of the set $\sum^{'}_U $ causes the existence of the other.

In case $\mathfrak{F'}$ and $\mathfrak{F''}$ are maximal members, assume that $\mathfrak{F'}\oplus \mathfrak{F''}$ is stable. Via the definition, $\mathfrak{F'}\oplus \mathfrak{F''}$ is a syzygy and so a linked sheaf which is a contradiction.
\end{proof}

\section*{Acknowledgement}
The authors would like to thank Iran national science foundation for scientific and financial support of this project.
\bibliographystyle{amsplain}

\end{document}